\newtheorem{theorem}{Theorem}
\newtheorem{lemma}{Lemma}
\newtheorem{mainth}{Main Theorem}
\newtheorem{corollary}{Corollary}
\newtheorem{remark}{Remark}
\def\PG{\mathrm{PG}}
\def\PGammaL{\mathrm{P\Gamma L}}
\def\F{\mathbb{F}}
\def\V{\mathrm{V}}
\def\D{\mathcal{D}}
\def\S{\mathcal{S}}
\def\B{\mathcal{B}}
\title{On the linearity of higher-dimensional blocking sets}
\author{G. Van de Voorde\thanks{The author is supported by the Fund for Scientific Research Flanders (FWO -- Vlaanderen). }}
   \date{\dateline{June 16, 2010}{Nov 29, 2010}\\
   \small Mathematics Subject Classification: 51E21}
\begin{document}
\maketitle
\begin{abstract} 
A small minimal $k$-blocking set $B$ in $\PG(n,q)$, $q=p^t$, $p$ prime, is a set of less than $3(q^k+1)/2$ points in $\PG(n,q)$, such that every $(n-k)$-dimensional space contains at least one point of $B$ and such that no proper subset of $B$ satisfies this property. The {\em linearity conjecture} states that all small minimal $k$-blocking sets in $\PG(n,q)$ are linear over a subfield  $\mathbb{F}_{p^e}$ of $\mathbb{F}_q$. Apart from a few cases, this conjecture is still open. 
In this paper, we show that to prove the linearity conjecture for $k$-blocking sets in $\PG(n,p^t)$, with exponent $e$ and $p^e\geq 7$, it is sufficient to prove it for one value of $n$ that is at least $2k$. Furthermore, we show that the linearity of small minimal blocking sets in $\PG(2,q)$ implies the linearity of small minimal $k$-blocking sets in $\PG(n,p^t)$, with exponent $e$, with $p^e\geq t/e+11$. 
\end{abstract}
Keywords: blocking set, linear set, linearity conjecture

\section{Introduction and preliminaries}
If $\V$ is a vectorspace, then we denote the corresponding projective space by $\PG(V)$. If $\V$ has dimension $n$ over the finite field $\F_q$, with $q$ elements, $q=p^t$, $p$ prime, then we also write $\V$ as $\V(n,q)$ and $\PG(V)$ as $\PG(n-1,q)$. A $k$-dimensional space will be called a $k$-space.

A {\em $k$-blocking set} in $\PG(n,q)$ is a set $B$ of points such that every $(n-k)$-space of $\PG(n,q)$ contains at least one point of $B$. A $k$-blocking set $B$ is called {\em small} if $|B|<3(q^k+1)/2$ and {\em minimal} if no proper subset of $B$ is a $k$-blocking set. The points of a $k$-space of $\PG(n,q)$ form a $k$-blocking set, and every $k$-blocking set containing a $k$-space is called {\em trivial}. Every small minimal $k$-blocking set $B$ in $\PG(n,p^t)$, $p$ prime, has an {\em exponent e}, defined to be the largest integer for which every $(n-k)$-space intersects $B$ in $1$ mod $p^e$ points. The fact that every small minimal $k$-blocking set has an exponent $e\geq 1$ follows from a result of Sz\H{o}nyi and Weiner and will be explained in Section 2. A minimal $k$-blocking set $B$ in $\PG(n,q)$ is of {\em R\'edei-type} if there exists a hyperplane containing $\vert B\vert -q^k$ points of $B$; this is the maximum number possible if $B$ is small and spans $\PG(n,q)$. For a long time, all constructed small minimal $k$-blocking sets were of R\'edei-type, and it was conjectured that all small minimal $k$-blocking sets must be of R\'edei-type. In 1998, Polito and Polverino \cite{polito} used a construction of Lunardon \cite{Lunardon} to construct small minimal {\em linear} blocking sets that were not of R\'edei-type, disproving this conjecture.  Soon people conjectured that all small minimal $k$-blocking sets in $\PG(n,q)$ must be linear. In 2008, the `Linearity conjecture' was for the first time formally stated in the literature, by Sziklai \cite{sziklai}.

A point set $S$ in $\PG(V)$, where $\V$ is an $(n+1)$-dimensional vector space over ${\mathbb{F}_{p^t}}$, is called {\em linear} if there exists a subset $U$ of $\V$ that forms an $\F_{p_0}$-vector space for some $\F_{p_0} \subset {\mathbb{F}}_{p^t}$, such that $S=\B(U)$, where 
$$\B(U):=\{\langle u \rangle_{\mathbb{F}_{p^t}}~:~u \in U\setminus \{0\}\}.$$ 
If we want to specify the subfield we call $S$ an {\it $\F_{p_0}$-linear set} (of $\PG(n,p^t)$).

We have a one-to-one correspondence between the points of $\PG(n,p_0^h)$ and the elements of a Desarguesian $(h-1)$-spread $\D$ of $\PG(h(n+1)-1,p_0)$. This gives us a different view on linear sets; namely, an $\F_{p_0}$-linear set is a set $S$ of points of $\PG(n,p_0^h)$ for which there exists a subspace $\pi$ in $\PG(h(n+1)-1,p_0)$ such that the points of $S$ correspond to the elements of $\D$ that have a non-empty intersection with $\pi$. We identify the elements of $\D$ with the points of $\PG(n,p_0^h)$, so we can view $\B(\pi)$ as a subset of $\D$, i.e.
$$\B(\pi)=\{S\in \D|S\cap \pi\neq \emptyset\}.$$

If we want to denote the element of $\D$ corresponding to the point $P$ of $\PG(n,p_0^h)$, we write $\S(P)$, analogously, we denote the set of elements of $\D$ corresponding to a subspace $H$ of $\PG(n,p_0^h)$, by $\S(H)$. For more information on this approach to linear sets, we refer to \cite{linearsets}. 

To avoid confusion, subspaces of $\PG(n,p_0^h)$ will be denoted by capital letters, while subspaces of $\PG(h(n+1)-1,p_0)$ will be denoted by lower-case letters.


\begin{remark}\label{trans} The following well-known property will be used throughout this paper: if $\B(\pi)$ is an $\F_{p_0}$-linear set in $\PG(n,p_0^h)$, where $\pi$ is a $d$-dimensional subspace of $\PG(h(n+1)-1,p_0)$, then for every point $x$ in $\PG(h(n+1)-1,p_0)$, contained in an element of $\B(\pi)$, there is a $d$-dimensional space $\pi'$, through $x$, such that $\B(\pi)=\B(\pi')$. This is a direct consequence of the fact that the elementwise stabilisor of $\D$ in $\PGammaL(h(n+1),p_0)$ acts transitively on the points of one element of $\D$. \end{remark}

To our knowledge, the Linearity conjecture for $k$-blocking sets $B$ in $\PG(n,p^t)$, $p$ prime, is still open, except in the following cases:
\begin{itemize}
\item $t=1$ (for $n=2$, see \cite{blok}; for $n>2$, this is a corollary of Theorem \ref{szonyi} (i));
\item $t=2$ (for $n=2$, see \cite{TS:97}; for $k=1$, see \cite{Storme-Weiner}; for $k\geq 1$, see \cite{bokler} and \cite{weiner});
\item $t=3$ (for $n=2$, see \cite{pol}; for $k=1$, see \cite{Storme-Weiner}; for $k\geq 1$, see \cite{LC} and independently \cite{nora},\cite{nora1});
\item $B$ is of R\'edei-type (for $n=2$, see \cite{redei2}; for $n>2$, see \cite{redei});
\item $B$ spans an $tk$-dimenional space (see \cite[Theorem 3.14]{sz}).
\end{itemize}

It should be noted that in $\PG(2,p^t)$, for $t=1,2,3$, all small minimal blocking sets are of R\'edei-type. Storme and Weiner show in \cite{Storme-Weiner} that small minimal $1$-blocking sets in $\PG(n,p^t)$, $t=2,3$, are of R\'edei-type too. The proofs rely on the fact that for $t=2,3$, small minimal blocking sets in $\PG(2,p^t)$ are listed. The special case $k=1$ in Main Theorem 1 of this paper shows that using the (assumed) linearity of planar small minimal blocking sets, it is possible to prove the linearity of small minimal $1$-blocking sets in $\PG(n,p^t)$, which reproofs the mentioned statements of Storme and Weiner in the cases $t=2,3$.

The techniques developed in \cite{LC} to show the linearity of $k$-blocking sets in $\PG(n,p^3)$, using the linearity of $1$-blocking sets in $\PG(n,p^3)$, can be modified to apply for general $t$. This will be Main Theorem 2 of this paper. In particular, this theorem reproofs the results of \cite{weiner}, \cite{LC}, \cite{nora}, \cite{nora1}.

In this paper, we prove the following main theorems. Recall that the {exponent $e$} of a small minimal $k$-blocking set is the largest integer such that every $(n-k)$-space meets in $1$ mod $p^e$ points. Theorem \ref{szonyi} (i) will assure that the exponent of a small minimal blocking set is at least $1$. 

\begin{mainth} If for a certain pair $(k,n^\ast)$ with $n^\ast\geq 2k$, all small minimal $k$-blocking sets in $\PG(n^\ast,p^t)$ are linear, then for all $n>k$, all small minimal $k$-blocking sets with exponent $e$ in $\PG(n,p^t)$, $p$ prime, $p^e\geq 7$, are linear.
\end{mainth}
In particular, this shows that if the linearity conjecture holds in the plane, it holds for all small minimal $1$-blocking sets with exponent $e$ in $\PG(n,p^t)$, $p^e\geq 7$.

\begin{mainth} If all small minimal $1$-blocking sets in $\PG(n,p^t)$ are linear, then all small minimal $k$-blocking sets with exponent $e$ in $\PG(n,p^t)$, $n>k$, $p^e\geq t/e+11$, are linear.
\end{mainth}
Combining the two main theorems yields the following corollary.
\begin{corollary}
If the linearity conjecture holds in the plane, it holds for all small minimal $k$-blocking sets with exponent $e$ in $\PG(n,p^t)$, $n>k$, $p$ prime, $p^e\geq t/e+11$.
\end{corollary}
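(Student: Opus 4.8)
The plan is to combine the two Main Theorems in the obvious order, but the subtlety is tracking the exponent through the chain of implications. Suppose the linearity conjecture holds in the plane $\PG(2,q)$ for all $q=p^t$; I want linearity of every small minimal $k$-blocking set $B$ with exponent $e$ in $\PG(n,p^t)$ whenever $n>k$ and $p^e\geq t/e+11$. First I would apply Main Theorem 2: its hypothesis is the linearity of all small minimal $1$-blocking sets in $\PG(n,p^t)$, so I need that intermediate statement. To obtain it, I apply Main Theorem 1 with $k=1$: it says that if, for a \emph{single} value $n^\ast\geq 2$, all small minimal $1$-blocking sets in $\PG(n^\ast,p^t)$ are linear, then all small minimal $1$-blocking sets with exponent $e'$ in $\PG(n,p^t)$ with $p^{e'}\geq 7$ are linear. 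Taking $n^\ast=2$, the hypothesis there is exactly the planar linearity conjecture, which we are assuming. So Main Theorem 1 gives linearity of all small minimal $1$-blocking sets in $\PG(n,p^t)$ \emph{of exponent $e'$ with $p^{e'}\geq 7$}.

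The first thing to check is that this exponent restriction does not actually weaken the conclusion we feed into Main Theorem 2, because we need \emph{all} small minimal $1$-blocking sets in $\PG(n,p^t)$ to be linear, not merely those of large exponent. The point is that the exponent $e'$ of a small minimal $1$-blocking set in $\PG(n,p^t)$ always divides $t$ (it arises from a subfield $\F_{p^{e'}}$ of $\F_{p^t}$ over which $B$ is conjectured linear, and more basically from the structure result behind Theorem \ref{szonyi}), and when $p^{e'}<7$ we are in the already-settled small cases: $p^{e'}\in\{2,3,4,5\}$ forces $p\in\{2,3,5\}$ and $e'\leq 2$, so $t$ itself is constrained and these are precisely the $t=1,2,3$ (and small $p$) situations covered by the list of known cases in the introduction — in all of those, small minimal $1$-blocking sets are known to be linear (indeed of R\'edei-type). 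Hence in every case small minimal $1$-blocking sets in $\PG(n,p^t)$ are linear, which is exactly the hypothesis of Main Theorem 2.

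Now I invoke Main Theorem 2: it yields linearity of all small minimal $k$-blocking sets with exponent $e$ in $\PG(n,p^t)$, $n>k$, provided $p^e\geq t/e+11$ — which is the hypothesis of the Corollary. This completes the deduction. The only genuinely delicate point, and the one I would spell out carefully, is the exponent bookkeeping in the previous paragraph: justifying that the $p^{e'}\geq 7$ clause in Main Theorem 1 can be removed for the purpose of chaining, by appealing to the known small-$q$ cases. Everything else is a formal concatenation of the two Main Theorems with $n^\ast=2$ and $k=1$, and since $p^e\geq t/e+11\geq 11\geq 7$ the hypotheses on $p^e$ are compatible along the chain. I would write this up in a few lines, emphasising only the exponent argument.
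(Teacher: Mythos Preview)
Your overall plan---apply Main Theorem~1 with $k=1$, $n^\ast=2$ to pass from the plane to all small minimal $1$-blocking sets, then feed this into Main Theorem~2---is exactly the combination the paper has in mind, and the paper gives no further argument.

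However, your bridging paragraph contains a genuine error. You claim that $p^{e'}<7$ forces $t$ into the known range $\{1,2,3\}$, but this is false: for instance $p=2$, $e'=1$ is compatible with any $t$, so a small minimal $1$-blocking set in $\PG(n,2^{100})$ of exponent $1$ has $p^{e'}=2<7$ and lies in no settled case. Thus you have not established the full hypothesis ``all small minimal $1$-blocking sets in $\PG(n,p^t)$ are linear'' as stated in Main Theorem~2.

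The correct way to close the gap is not to chase the small-exponent $1$-blocking sets, but to observe that Main Theorem~2 never actually needs them. In the proof of Main Theorem~2 (and in the hypothesis~(H) it carries by induction), the only lower-dimensional blocking sets that appear are of the form $B'=B\cap\Pi$ for a small $(n-k+s)$-space $\Pi$; every $(n-k)$-space inside $\Pi$ meets $B$, and hence $B'$, in $1\bmod p^{e}$ points, so the exponent $e'$ of $B'$ satisfies $e'\geq e$. Under the Corollary's hypothesis $p^{e}\geq t/e+11>7$, this gives $p^{e'}\geq 7$, and Main Theorem~1 (with $k=1$, $n^\ast=2$) does apply to every such $B'$. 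So the chain goes through once you track the exponent inheritance rather than attempting to cover all $1$-blocking sets unconditionally.
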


\section{Previous results}
In this section, we list a few results on the linearity of small minimal $k$-blocking sets and on the size of small $k$-blocking sets that will be used throughout this paper. The first of the following theorems of Sz\H{o}nyi and Weiner has the linearity of small minimal $k$-blocking sets in projective spaces over prime fields as a corollary.
\begin{theorem} \label{szonyi} Let $B$ be a $k$-blocking set in $\PG(n,q)$, $q=p^t$, $p$ prime.
\begin{itemize}
\item[(i)]{\rm \cite[Theorem 2.7]{sz}} If $B$ is small and minimal, then $B$ intersects every subspace of $\PG(n,q)$ in $1$ mod $p$ or zero points.
\item[(ii)]{\rm \cite[Lemma 3.1]{sz}} If $\vert B \vert\leq 2q^k$ and every $(n-k)$-space intersects $B$ in $1$ mod $p$ points, then $B$ is minimal.
\item[(iii)]{\rm \cite[Corollary 3.2]{sz}} If $B$ is small and minimal, then the projection of $B$ from a point $Q\notin B$ onto a hyperplane $H$ skew to $Q$ is a small minimal $k$-blocking set in $H$.
\item[(iv)]{\rm \cite[Corollary 3.7]{sz}} The size of a non-trivial $k$-blocking set in $\PG(n,p^t)$, $p$ prime, with exponent $e$, is at least $p^{tk}+1+p^e\lceil \frac{p^{tk}/p^e+1}{p^e+1}\rceil$.
\end{itemize}
\end{theorem}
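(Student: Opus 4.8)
The plan is to treat the four statements as a single package resting on two engines: the R\'edei/lacunary-polynomial method and the Sz\H{o}nyi--Weiner tangent-counting technique. Parts (ii) and (iii) are combinatorial consequences of the \emph{$1$ mod $p$} behaviour, while the core of (i) and the sharp bound (iv) are the algebraic heart. I would therefore first establish the basic intersection statement, then deduce (ii) and (iii), and finally sharpen to (iv).

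For the core of (i), I would place $B$ in $\PG(n,q)=\mathrm{AG}(n,q)\cup H_\infty$ and associate to each affine point of $B$ a linear form; the product of these forms is a R\'edei-type polynomial $R$. The condition that every $(n-k)$-space is blocked translates into $R$ being a \emph{lacunary} polynomial (a large block of intermediate-degree coefficients vanishes), and the structure theorems for lacunary polynomials over $\F_q$ force the multiplicities of the relevant factors to be divisible by $p$. Reading this back geometrically yields that every $(n-k)$-space meets $B$ in $1$ mod $p$ points. The statement for an arbitrary subspace $\Pi$ then follows by a mod-$p$ incidence count relating $|B\cap\Pi|$ to the intersection numbers of $B$ with the $(n-k)$-spaces incident with $\Pi$, all of which are already known to be $1$ mod $p$. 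This algebraic step --- the passage from the lacunary structure of $R$ to the divisibility of the factor multiplicities --- is where I expect the main obstacle to lie, since it is exactly the point at which the hypothesis ``$B$ small'' is needed to control the degree of $R$.

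Part (ii) I would prove by a clean double count. If $B$ were not minimal there is a redundant point $P$, so every $(n-k)$-space through $P$ also meets $B\setminus\{P\}$; together with the hypothesis that each such space meets $B$ in $1$ mod $p$ points, every $(n-k)$-space through $P$ contains at least $p$ points of $B\setminus\{P\}$. Counting incidences between $B\setminus\{P\}$ and the $(n-k)$-spaces through $P$ gives $(|B|-1)\,M\ge p\,N_P$, where $N_P$ is the number of $(n-k)$-spaces through $P$ and $M$ the number through two fixed points; since $N_P/M=(q^n-1)/(q^{n-k}-1)>q^k$, this forces $|B|>1+pq^k\ge 2q^k+1$, contradicting $|B|\le 2q^k$. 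For (iii), projection from $Q\notin B$ onto a skew hyperplane $H$ sends $B$ to a set $B'$; blocking is immediate, because the cone $\langle Q,\Sigma\rangle$ over any $(n-1-k)$-space $\Sigma\subset H$ is an $(n-k)$-space and hence meets $B$, and smallness is inherited from $|B'|\le|B|$. For minimality I would use the tangent characterisation: a point $R\in B'$ is essential precisely when there is an $(n-k)$-space $\pi$ through $Q$ with $\emptyset\ne\pi\cap B\subseteq$ the line $QR$, whose projection $\pi\cap H$ is then tangent to $B'$ at $R$; the existence of such a $\pi$ follows from the minimality of $B$ together with the $1$ mod $p$ property of part (i).

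Finally, for (iv) I would refine the polynomial argument of (i) using the exponent $e$. The hypothesis that every $(n-k)$-space meets $B$ in $1$ mod $p^e$ points upgrades the divisibility in the lacunary analysis from $p$ to $p^e$, so that the relevant factors of $R$ occur with multiplicities divisible by $p^e$. A degree count on these $p^e$-fold factors, carried out within the R\'edei-polynomial framework of (i), produces the additive defect $p^e\lceil(p^{tk}/p^e+1)/(p^e+1)\rceil$ beyond the trivial size $p^{tk}+1$, the ceiling reflecting that the number of such factors must be an integer. The delicate point here, beyond the algebra already needed for (i), is making the degree bookkeeping sharp enough to obtain the exact ceiling rather than a weaker linear estimate.
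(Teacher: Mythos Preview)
The paper does not prove this theorem at all: it is stated in Section~2 (``Previous results'') with each item carrying an explicit citation to Sz\H{o}nyi--Weiner \cite{sz}, and no proof is supplied. So there is no ``paper's own proof'' to compare your proposal against; the appropriate comparison is with the original source, not with this paper.

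That said, your outline is broadly faithful to the Sz\H{o}nyi--Weiner arguments. A few remarks on the details. For (ii), your double count is exactly the right mechanism and the ratio $N_P/M=(q^n-1)/(q^{n-k}-1)>q^k$ is correct. For (iii), your sketch of minimality via tangents is vaguer than it needs to be: the clean route is to show that every $(n-1-k)$-space $\Sigma\subset H$ meets $B'$ in $1\bmod p$ points (this is not immediate, since distinct points of $B$ in the cone $\langle Q,\Sigma\rangle$ could project to the same point of $B'$; one uses that lines through $Q$ meet $B$ in $0$ or $1\bmod p$ points to control this), and then apply (ii) directly to $B'$. Your tangent-space formulation would work too, but the step ``the existence of such a $\pi$ follows from the minimality of $B$ together with (i)'' hides a nontrivial argument. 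For (i) and (iv), your description of the lacunary-polynomial engine and the passage from $p$- to $p^e$-divisibility is accurate as a high-level plan, and you have correctly identified where the smallness hypothesis enters; the actual execution in \cite{sz} involves resultant-type arguments rather than a single R\'edei polynomial, but the spirit is the same.
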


Part (iv) of the previous theorem gives a lower bound on the size of a $k$-blocking set. In this paper, we will work with the following, weaker, lower bound.
\begin{corollary}  \label{grootte} The size of a non-trivial $k$-blocking set in $\PG(n,p^t)$, $p$ prime, with exponent $e$, is at least $p^{tk}+p^{tk-e}-p^{tk-2e}$.
\end{corollary}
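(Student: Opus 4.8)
The plan is to obtain Corollary \ref{grootte} as a short, purely arithmetical consequence of the sharp bound in Theorem \ref{szonyi}(iv). The first step is to discard the ceiling function: since $\lceil x\rceil\ge x$, a non-trivial $k$-blocking set with exponent $e$ in $\PG(n,p^t)$ has at least
\[
p^{tk}+1+p^e\cdot\frac{p^{tk}/p^e+1}{p^e+1}=p^{tk}+1+\frac{p^{tk}+p^e}{p^e+1}
\]
points. Thus it suffices to prove the (denominator-free, but weaker) inequality
\[
p^{tk}+1+\frac{p^{tk}+p^e}{p^e+1}\ \ge\ p^{tk}+p^{tk-e}-p^{tk-2e}.
\]

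The second step is to clean this up. Cancelling $p^{tk}$ from both sides, combining the left-hand side over the common denominator $p^e+1$, and then multiplying through by $p^e+1$ (which is positive, so the inequality direction is preserved) reduces the claim to a polynomial inequality in the two quantities $p^e$ and $p^{tk}$. Writing $Q:=p^e$ for brevity and using $p^{tk-e}-p^{tk-2e}=p^{tk-2e}(Q-1)$ together with $(Q-1)(Q+1)=Q^2-1$, the inequality collapses to
\[
2Q+1\ \ge\ -\,p^{tk-2e},
\]
which holds trivially since the left side is positive and the right side is nonpositive. This completes the argument.

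I do not expect any genuine obstacle here: the corollary is just a convenient, looser repackaging of Theorem \ref{szonyi}(iv) whose only virtue over the original is the absence of the ceiling and of a fraction. The only care required is bookkeeping — checking that the ceiling is dropped in the direction that weakens (iv) rather than strengthens it, and that signs are handled correctly when clearing the denominator $p^e+1$ and the factor $p^{2e}$. (Note that the estimate goes through regardless of whether $tk\ge 2e$; if $tk<2e$ the term $p^{tk-2e}$ is simply a number in $(0,1)$ and the same inequality applies.)
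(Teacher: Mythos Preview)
Your proposal is correct and is exactly the approach implied by the paper: the paper itself gives no proof of this corollary, merely introducing it as ``the following, weaker, lower bound'' obtained from Theorem~\ref{szonyi}(iv), and your argument supplies precisely the routine arithmetic (drop the ceiling, clear denominators) that the paper leaves to the reader.
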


If a blocking set $B$ in $\PG(2,q)$ is $\F_{p_0}$-linear, then every line intersects $B$ in an $\F_{p_0}$-linear set. If $B$ is small, many of these $\F_{p_0}$-linear sets are $\F_{p_0}$-sublines (i.e. $\F_{p_0}$-linear sets of rank 2). The following theorem of Sziklai shows that for {\em all} small minimal blocking sets, this property holds. 

\begin{theorem} \begin{itemize}
\item[(i)] {\rm\cite[Proposition 4.17 (2)]{sziklai}}\label{sziklai} If $B$ is a small minimal blocking set in $\PG(2,q)$, with $|B|=q+\kappa$, then the number of $(p_0+1)$-secants to $B$ through a point $P$ of $B$ lying on a $(p_0+1)$-secant to $B$, is at least $$q/p_0-3(\kappa-1)/p_0+2.$$
\item[(ii)] {\rm \cite[Theorem 4.16]{sziklai}} \label{linrechte} Let $B$ be a small minimal blocking set with exponent $e$ in $\PG(2,q)$. If for a certain line $L$, $\vert L\cap B \vert=p^e+1$, then $\F_{p^e}$ is a subfield of $\F_q$ and $L\cap B$ is $\F_{p^e}$-linear.
\end{itemize}
\end{theorem}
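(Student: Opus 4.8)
Both parts concern a small minimal blocking set $B$ in $\PG(2,q)$, with $|B|=q+\kappa<3(q+1)/2$, so the two facts I would lean on throughout are that every line meets $B$ in $1\bmod p$ points by Theorem~\ref{szonyi}(i) — hence in $1\bmod p^e$ points once $e$ is the exponent — and that $B$ is small and non-trivial. I would prove (i) by counting together with a stability estimate, and (ii) by the R\'edei (lacunary) polynomial method.

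\emph{Part (i).} Fix a point $P\in B$ on at least one $(p_0+1)$-secant, so the number $t_1$ of such secants through $P$ is at least $1$ (and if the claimed lower bound is at most $1$ there is nothing left to prove). Counting the $|B|-1$ points of $B\setminus\{P\}$ along the $q+1$ lines through $P$, and writing $t_j$ for the number of lines through $P$ meeting $B$ in exactly $jp_0+1$ points, gives
$$\sum_{j\ge0}t_j=q+1,\qquad p_0\sum_{j\ge1}j\,t_j=q+\kappa-1.$$
From these, using $j\le 2(j-1)$ for $j\ge 2$, one gets $t_1\ge 2S-(q+\kappa-1)/p_0$, where $S=\sum_{j\ge1}t_j$ is the number of secant lines through $P$; so it suffices to prove that $P$ lies on at least $(q-\kappa+1)/p_0+1$ secant lines, i.e. on at most $q-(q-\kappa+1)/p_0$ tangents. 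A purely elementary preliminary is that no secant of a non-trivial small minimal blocking set is longer than $\kappa$ (else the complement of such a secant fails to block the lines through one of its points not in $B$), but that bound alone is too weak; the tangent bound genuinely needs the Sz\H{o}nyi--Weiner stability theory, to the effect that around a point on a short secant a small minimal blocking set behaves almost like a linear set and therefore has very few tangents there. Substituting that bound into the displayed inequality yields $t_1\ge q/p_0-3(\kappa-1)/p_0+2$. The main obstacle is exactly this bound on the number of tangents through $P$: naive double counting produces only weak estimates, and it is here that smallness enters in an essential way.

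\emph{Part (ii).} Let $L$ be a line with $|L\cap B|=p^e+1$. By minimality $B$ has a tangent line through some point of $L\cap B$; put this tangent at infinity and that point at $(0:1:0)$, so that $U:=B\setminus\{(0:1:0)\}$ is an affine point set of size $q+\kappa-1$ with no further point at infinity, and the $p^e$ affine points of $L$ in $B$ correspond to a set $A\subseteq\F_q$ of size $p^e$ in the relevant coordinate. Form the R\'edei polynomial $R(X,Y)=\prod_{(a,b)\in U}(X+aY-b)\in\F_q[X,Y]$: for a fixed slope $y$, the multiplicity of a root $x$ of $R(X,y)$ equals the number of points of $U$ on the corresponding line in direction $(0:1:y)$. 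The exponent condition then says that for every $y$ not giving a line through $(0:1:0)\in B$ all these multiplicities are $\equiv 1\bmod p^e$, so $R(X,y)$ is a squarefree polynomial times a $p^e$-th power; while for the parallel class through $(0:1:0)$ all multiplicities are $\equiv 0\bmod p^e$, so the factor $h(X):=\prod_{b\in A}(X-b)$ occurs in the corresponding slice with exact multiplicity $p^e$. Applying the lacunary-polynomial analysis of Blokhuis, Ball, Brouwer, Storme and Sz\H{o}nyi, in the sharper form adapted to the $1\bmod p^e$ setting, to this configuration forces $h(X)$ to be a translate of an $\F_p$-linearized polynomial whose root set is a one-dimensional $\F_{p^e}$-space; in particular $\F_{p^e}$ is a subfield of $\F_q$, and, reading $(0:1:0)$ as the point $\infty$ on $L\cong\PG(1,q)$, the set $A\cup\{\infty\}$ is precisely a sub-$\PG(1,p^e)$, i.e. $L\cap B$ is $\F_{p^e}$-linear. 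The main obstacle is this last algebraic step: one must extract, from the $1\bmod p^e$ behaviour of $B$ in \emph{all} directions, the conclusion that this single factor of the R\'edei polynomial is linearized over precisely the subfield $\F_{p^e}$; this is the technical core of the statement, and it is here that the hypothesis that $e$ is the exponent of $B$ — not merely that some line meets $B$ in $p^e+1$ points — is used in full.
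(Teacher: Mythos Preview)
The paper does not prove this theorem at all: it is quoted without proof in Section~2 (``Previous results'') and attributed to Sziklai's paper \cite{sziklai}, specifically Proposition~4.17(2) and Theorem~4.16 there. So there is no ``paper's own proof'' to compare your proposal against; the authors simply import these statements as black boxes and use them later (for instance, in Lemma~\ref{lemma1} and Lemma~\ref{pplus1}).

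As to the proposal itself: for part~(i), your counting identities and the inequality $t_1\ge 2S-(q+\kappa-1)/p_0$ are correct, but you then explicitly defer the decisive tangent bound to ``Sz\H{o}nyi--Weiner stability theory'' without saying which statement or why it applies; that is the entire content of the proposition, so as written this is not a proof but a reduction to an unspecified lemma. Sziklai's actual argument is algebraic: it goes through the R\'edei polynomial and the associated algebraic curve, bounding the number of long secants through $P$ via the number of $\F_q$-rational points on components of that curve, not via a stability result. For part~(ii), your outline --- choose coordinates so that a tangent becomes the line at infinity, form the R\'edei polynomial, and use the lacunary-polynomial/fully-reducible-curve analysis to force the relevant factor to be $\F_{p^e}$-linearised --- is indeed the shape of Sziklai's proof, but the sentence ``Applying the lacunary-polynomial analysis \ldots\ forces $h(X)$ to be \ldots'' hides all of the work; you have correctly located the obstacle but not removed it.
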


The next theorem, by Lavrauw and Van de Voorde, determines the intersection of an $\F_p$-subline with an $\F_p$-linear set; all possibilities for the size of the intersection that are obtained in this statement, can occur (see \cite{linearsets}). The bound on the characteristic of the field appearing in Main Theorem 2 arises from this theorem.
\begin{theorem}{\rm\cite[Theorem 8]{linearsets}} \label{subline}An $\mathbb{F}_{p_0}$-linear set of rank $k$ in $\PG(n,p^t)$ and an $\mathbb{F}_{p_0}$-subline (i.e. an $\mathbb{F}_{p_0}$-linear set of rank $2$), intersect in $0,1,2,\ldots,k$ or $p_0+1$ points.
\end{theorem}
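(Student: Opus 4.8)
The plan is to reduce the statement to a line and there identify the intersection with a count of ``$\F_{p_0}$-eigenvalues'' of a single $\F_{p_0}$-linear map.

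Write the linear set of rank $k$ as $\B(U)$ with $U$ a $k$-dimensional $\F_{p_0}$-subspace of $\V=\V(n+1,p^t)$, and write the subline as $\B(B)$ with $B$ a $2$-dimensional $\F_{p_0}$-subspace of $\V$; we may assume $M:=\langle B\rangle_{\F_{p^t}}$ is a line (otherwise $\B(B)$ is a single point and there is nothing to prove). For a point $\langle v\rangle$ with $v\in M$ we have $\langle v\rangle\in\B(U)$ iff $\langle v\rangle_{\F_{p^t}}\cap U\neq 0$, and since $\langle v\rangle_{\F_{p^t}}\subseteq M$ this holds iff $\langle v\rangle_{\F_{p^t}}\cap(U\cap M)\neq 0$. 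Hence $\B(U)\cap\B(B)=\B(U\cap M)\cap\B(B)$, and $\B(U\cap M)$ is an $\F_{p_0}$-linear set on the line $M\cong\PG(1,p^t)$ of rank $r:=\dim_{\F_{p_0}}(U\cap M)\le k$. So it suffices to prove: on $\PG(1,p^t)$, an $\F_{p_0}$-linear set $N$ of rank $r$ meets a subline $S$ in at most $r$ points unless $S\subseteq N$ (in which case $|N\cap S|=p_0+1$), since then $|\B(U)\cap\B(B)|=|N\cap S|$ lies in $\{0,1,\dots,k\}$ or equals $p_0+1$.

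Assume $S\not\subseteq N$ and fix a point $P^\ast\in S\setminus N$. After applying a suitable projectivity of $\PG(1,p^t)$ --- these preserve the property of being an $\F_{p_0}$-linear set together with its rank, and act transitively on pairs (subline, point of it) --- we may take $S=\{\langle(1,\mu)\rangle:\mu\in\F_{p_0}\}\cup\{\langle(0,1)\rangle\}$ with $P^\ast=\langle(0,1)\rangle$ and write $N=\B(A)$ with $A$ an $r$-dimensional $\F_{p_0}$-subspace of $\F_{p^t}^{2}$. The condition $P^\ast\notin N$ says $A\cap(\{0\}\times\F_{p^t})=0$, i.e.\ the first-coordinate projection is injective on $A$; hence $A=\{(x,f(x)):x\in W\}$ for an $r$-dimensional $\F_{p_0}$-subspace $W\le\F_{p^t}$ and an $\F_{p_0}$-linear map $f\colon W\to\F_{p^t}$. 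A direct check then shows, for $\mu\in\F_{p_0}$, that $\langle(1,\mu)\rangle\in N$ iff there is a nonzero $\lambda\in W$ with $f(\lambda)=\mu\lambda$ (note $\mu\lambda\in W$ because $\mu\in\F_{p_0}$). Consequently $|N\cap S|$ equals the number of $\mu\in\F_{p_0}$ that arise as such an eigenvalue of $f$.

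It remains to bound this number by $r=\dim_{\F_{p_0}}W$, and for this I would prove that eigenvectors belonging to distinct eigenvalues are $\F_{p_0}$-independent: given distinct $\mu_0,\dots,\mu_m\in\F_{p_0}$ with eigenvectors $0\ne\lambda_i\in W$, take a shortest nontrivial $\F_{p_0}$-dependence among the $\lambda_i$, say of length $\ell\ge 2$ and involving $\lambda_{i_1},\dots,\lambda_{i_\ell}$ with coefficients $c_j\in\F_{p_0}^\ast$; applying $f-\mu_{i_\ell}\iota$, where $\iota\colon W\hookrightarrow\F_{p^t}$ is the inclusion, annihilates the last summand and produces a nontrivial dependence of length $\ell-1$ with nonzero coefficients $c_j(\mu_{i_j}-\mu_{i_\ell})$, which for $\ell\ge 3$ contradicts minimality and for $\ell=2$ forces $\lambda_{i_1}=0$. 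Hence $m+1\le r$ and $|N\cap S|\le r\le k$. The one point that is not purely formal is precisely this last argument: $f$ does not map $W$ into itself, so this is not literally the eigenvector theorem for an endomorphism, and one must check that every $\F_{p_0}$-combination of the $\lambda_i$ still lies in the domain $W$ (so that $f$ may be applied) and use $\mu_i\in\F_{p_0}$ to keep $\mu_i\lambda_i$ in $W$; once this is observed the rest of the argument is routine.
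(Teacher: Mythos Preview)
The paper does not give its own proof of this statement: it is quoted verbatim from \cite[Theorem~8]{linearsets} and used as a black box, so there is no in-paper argument to compare yours against.

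Your proof is correct and self-contained. The reduction to the ambient line $M$ via $\B(U)\cap\B(B)=\B(U\cap M)\cap\B(B)$ is valid, and the normalisation step is legitimate because an $\F_{p^t}$-linear bijection is in particular $\F_{p_0}$-linear (hence preserves $\F_{p_0}$-linear sets and their rank) while $\PGL(2,p^t)$ is sharply $3$-transitive (so any pointed subline can be mapped to the standard one). The identification of $N\cap(S\setminus\{P^\ast\})$ with the set of $\F_{p_0}$-eigenvalues of $f\colon W\to\F_{p^t}$ is exact, and the independence argument goes through precisely because the $\mu_i$ lie in $\F_{p_0}$: from a minimal relation $\sum_j c_j\lambda_{i_j}=0$ one applies $f$ term-by-term (each $\lambda_{i_j}\in W$) to get $\sum_j c_j\mu_{i_j}\lambda_{i_j}=0$, still an $\F_{p_0}$-relation; subtracting $\mu_{i_\ell}$ times the original relation yields the shorter one. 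No iterated application of $f$ is needed, so the fact that $f(W)\not\subseteq W$ never actually interferes.

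For context, the source \cite{linearsets} works throughout in the Desarguesian-spread model that this paper also adopts (see Remark~\ref{trans}): there the subline is represented by a line of $\PG(2h-1,p_0)$ and the rank-$r$ linear set by an $(r-1)$-space, and the intersection count becomes a question about spread elements meeting both. Your coordinate/eigenvalue approach is a genuinely different and more elementary route to the same bound; the geometric model, on the other hand, makes it easier to see that each value $0,1,\dots,k,p_0+1$ actually occurs, which is the companion statement mentioned just before Theorem~\ref{subline}.
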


The following lemma is a straightforward extension of \cite[Lemma 7]{LC}, where the authors proved it for $h=3$.
\begin{lemma}\label{lemma5}\label{groottes}
If $B$ is a subset of $\PG(n,p_0^h)$, $p_0\geq 7$, intersecting every $(n-k)$-space, $k\geq 1$, in $1\mod{p_0}$ points, and $\Pi$ is an $(n-k+s)$-space, $s< k$, then either
$$\vert B\cap \Pi \vert< p_0^{hs}+p_0^{hs-1}+p_0^{hs-2}+3p_0^{hs-3}$$
or
$$\vert B\cap \Pi \vert> p_0^{hs+1}-p_0^{hs-1}-p_0^{hs-2}-3p_0^{hs-3}.$$
Furthermore, $\vert B\vert <p_0^{hk}+p_0^{hk-1}+p_0^{hk-2}+3p_0^{hk-3}$.
\end{lemma}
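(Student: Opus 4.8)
The plan is to prove the dichotomy by induction on $s$, for $s\in\{0,1,\dots,k\}$ at once, and then to obtain the bound on $|B|$ from the endpoint $s=k$ of this induction together with the smallness of $B$. Write $q=p_0^h$ (so $q\geq 7$), set $f(s)=p_0^{hs}+p_0^{hs-1}+p_0^{hs-2}+3p_0^{hs-3}$ and $g(s)=p_0^{hs+1}-p_0^{hs-1}-p_0^{hs-2}-3p_0^{hs-3}$, and let $\theta_m=(q^{m+1}-1)/(q-1)$ denote the number of points, equivalently hyperplanes, of $\PG(m,q)$. I would start by recording the identities $q\,f(s-1)=f(s)$, $q\,g(s-1)=g(s)$, $\theta_m=q\,\theta_{m-1}+1$, the inequalities $f(s)<\tfrac32 q^s<g(s)$ (for $q\geq 7$), and the residue fact that $\lceil f(s)\rceil\not\equiv 1\bmod p_0$; and I would note that if $\Pi$ is an $(n-k+s)$-space then $T:=B\cap\Pi$ is an $s$-blocking set of $\Pi\cong\PG(n-k+s,q)$ each of whose codimension-$s$ sections meets $T$ in $1\bmod p_0$ points. (One may assume $n>k$, the case $n=k$ being trivial.)

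For $s=0$ the claim is immediate: $\Pi$ is an $(n-k)$-space, so $|T|\equiv 1\bmod p_0$, hence $|T|=1<f(0)$ or $|T|\geq p_0+1>g(0)$. For the inductive step, let $1\leq s\leq k$, set $m=\dim\Pi=n-k+s$ (so $m\geq 2$), and double count incidences between $B$ and the hyperplanes of $\Pi$: $\sum_H|B\cap H|=|T|\,\theta_{m-1}$, where by the induction hypothesis each section $|B\cap H|$ (with $H$ an $(n-k+(s-1))$-space) is either \emph{small}, $<f(s-1)$, or \emph{large}, $>g(s-1)$. If every section is small then $|T|\,\theta_{m-1}<\theta_m f(s-1)=(q\,\theta_{m-1}+1)f(s-1)$, so $|T|<f(s)+f(s-1)/\theta_{m-1}<f(s)+1$, and the congruence $|T|\equiv 1\bmod p_0$ eliminates the boundary value, giving $|T|<f(s)$; if every section is large then symmetrically $|T|\,\theta_{m-1}>\theta_m g(s-1)$ yields $|T|>q\,g(s-1)=g(s)$.

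The decisive and hardest part is the \emph{mixed case}, where $\Pi$ has both small and large hyperplane sections and one must exclude $|T|\in[f(s),g(s)]$; this is the step I expect to require the real work. The idea is to exploit the large multiplicative gap $g(s-1)/f(s-1)\approx p_0\geq 7$ between the two regimes. For $s\geq 2$ I would fix an $(m-2)$-space $\mu$, bound $|B\cap\mu|$ by the induction hypothesis applied to this $(n-k+(s-2))$-space, and exploit the pencil identity $|T|=\sum_{H\supseteq\mu}|B\cap H|-q\,|B\cap\mu|$ over the $q+1$ hyperplanes through $\mu$, taking $\mu$ first inside a small section and then inside a large one; comparing the resulting estimates should force the large sections to outnumber the small ones so strongly that $|T|>g(s)$, a balanced mixture being numerically impossible. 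The boundary case $s=1$ (and, within the recursion, $h=1$) escapes this scheme because the twice-reduced spaces drop below dimension $n-k$; there I would argue directly, using that $T$ is then a $1$-blocking set of $\PG(m,q)$ whose hyperplane sections are $1$ or $\geq p_0+1$: if $|T|\leq 2q$ then Theorem~\ref{szonyi}(ii) makes $T$ minimal, so either $T$ is trivial --- whence $T$ is a line, $|T|=q+1<f(1)$, unless it carries extra points, in which case a line-pencil count on those points forces $|T|>p_0q+1>g(1)$ --- or $T$ is non-trivial, where Corollary~\ref{grootte} gives $|T|\geq q+p_0^{h-1}-p_0^{h-2}$ and the same kind of counting, together with the structure of small minimal $1$-blocking sets (e.g.\ the $(p_0+1)$-secant counts of Theorem~\ref{sziklai} after projecting onto a plane), excludes the interval $[f(1),2q]$; and if $|T|>2q$ the counting pushes $|T|$ past $g(1)$.

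Finally, applying the hyperplane double count of the second paragraph with $\Pi=\PG(n,p_0^h)$ itself --- the case $s=k$, whose hyperplanes are $(n-k+(k-1))$-spaces governed by the dichotomy just established for $s=k-1$ --- gives $|B|<f(k)$ or $|B|>g(k)$; since $B$ is small, i.e.\ $|B|<\tfrac32(q^k+1)<g(k)$, we conclude $|B|<f(k)$, as claimed. Apart from the mixed case (and its $s=1$, $h=1$ boundary), every step is routine bookkeeping with the $\theta$-identities and the congruence modulo $p_0$.
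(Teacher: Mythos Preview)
Your approach is genuinely different from the paper's, and the central step—the mixed case—is where your argument has a real gap.

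The paper does not induct on $s$ at all. It fixes $\Pi$, lets $x_i$ be the number of $(n-k)$-subspaces of $\Pi$ meeting $B_\Pi:=B\cap\Pi$ in exactly $i$ points, writes the three standard identities for $\sum x_i$, $\sum ix_i$, $\sum i(i-1)x_i$ in terms of Gaussian binomials and $|B_\Pi|$, and combines them with the second-moment inequality $\sum_i (i-1)(i-1-p_0)x_i\geq 0$ (which holds because every $i\equiv 1\bmod p_0$). This yields a single explicit quadratic inequality in $|B_\Pi|$; one then checks numerically that the quadratic is negative at both $f(s)$ and $g(s)$ when $p_0\geq 7$ and $s<k$, so $|B_\Pi|$ lies outside $[f(s),g(s)]$. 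No induction, no case split, no mixed case.

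Your inductive scheme, by contrast, rests entirely on the mixed case, and the sketch there does not close. The pencil identity $|T|=\sum_{H\supseteq\mu}|B\cap H|-q\,|B\cap\mu|$ is correct, but ``comparing the resulting estimates should force the large sections to outnumber the small ones so strongly that $|T|>g(s)$'' is precisely the content of the lemma, and you have not carried it out; nothing you wrote makes it clear that the bounds obtained from a small $\mu$ and from a large $\mu$ are incompatible with $|T|\in[f(s),g(s)]$. The $s=1$ boundary is worse: you invoke Theorem~\ref{szonyi}(ii), Corollary~\ref{grootte}, Theorem~\ref{sziklai}, and an unspecified ``same kind of counting'' to exclude $[f(1),2q]$ for a non-trivial minimal $1$-blocking set, but none of those results furnishes an upper bound below $f(1)$, and you have not shown how to extract one. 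So the heart of the lemma is asserted rather than proved.

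A minor point: in the last paragraph you use ``since $B$ is small,'' which is not among the hypotheses (and indeed $B=\PG(n,q)$ satisfies the $1\bmod p_0$ condition with $|B|\gg f(k)$). The paper instead notes that the quadratic is nonnegative when $|B|$ equals the size of a $k$-space and negative at $f(k)$, pinning down the relevant root; your use of smallness matches how the lemma is applied later, but you should flag it as an extra assumption.
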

\begin{proof}
Let $\Pi$ be an $(n-k+s)$-space of $\PG(n,p_0^h)$, $s\leq k$, and put $B_\Pi:=B\cap \Pi$.
Let $x_i$ denote the number of $(n-k)$-spaces of $\Pi$ intersecting $B_\Pi$ in $i$ points. 
Counting the number of $(n-k)$-spaces, the number of incident pairs $(P, \Sigma)$ with $
P\in B_\Pi, P\in \Sigma,\Sigma$ an $(n-k)$-space, and the number of triples $(P_1,P_2,\Sigma)
$, with $P_1,P_2\in B_\Pi$, $P_1\neq P_2$, $P_1,P_2\in \Sigma$, $\Sigma$ an $(n-k)$-space 
yields:
 \begin{eqnarray}
\sum_i x_i&=&\left[\begin{array}{c}n-k+s+1\\n-k+1 \end{array}\right]_{p_0^h},\\
\sum_i ix_i&=&\vert B_\Pi \vert \left[\begin{array}{c}n-k+s\\n-k \end{array}\right]_{p_0^h},\\
\sum i(i-1)x_i&=&\vert B_\Pi\vert (\vert B_\Pi\vert-1)\left[\begin{array}{c}n-k+s-1\\n-k-1 
\end{array}\right]_{p_0^h}.
\end{eqnarray}
Since we assume that every $(n-k)$-space intersects $B$ in $1\mod{p_0}$ points, it follows that 
every $(n-k)$-space of $\Pi$ intersect $B_\Pi$ in $1\mod{p_0}$ points, and hence
$\sum_i (i-1)(i-1-p_0)x_i\geq 0$.
Using Equations (1), (2), and (3), this yields that
$$\vert B_\Pi \vert (\vert B_\Pi\vert -1)(p_0^{hn-hk+h}-1)(p_0^{hn-hk}-1)-(p_0+1)\vert B_\Pi \vert (p_0^{hn-hk+hs}-1)(p_0^{hn-hk+h}-1)$$
$$+(p_0+1)(p_0^{hn-hk+hs+h}-1)(p_0^{hn-hk+hs}-1)\geq 0.$$ Putting $\vert B_\Pi\vert=p_0^{hs}+p_0^{hs-1}+p_0^{hs-2}+3p_0^{hs-3}$  in this inequality, with $p_0\geq7$, gives a contradiction; putting $\vert B_\Pi\vert=p_0^{hs+1}-p_0^{hs-1}-p_0^{hs-2}-3p_0^{hs-3}$ in this inequality, with $p_0\geq7$, gives a contradiction if $s<k$. For $s=k$, it is sufficient to note that when $\vert B\vert$ is the size of a $k$-space, the inequality holds, to deduce that $\vert B \vert <p_0^{hk}+p_0^{hk-1}+p_0^{hk-2}+3p_0^{hk-3}$. The statement follows.
\end{proof}
Let $B$ be a subset of $\PG(n,p_0^h)$, $p_0\geq 7$, intersecting every $(n-k)$-space, $k\geq 1$, in $1\mod{p_0}$ points. From now on, we call an $(n-k+s)$-space {\em small} if it meets $B$ in less than $p_0^{hs}+p_0^{hs-1}+p_0^{hs-2}+3p_0^{hs-3}$ points, and {\em large} if it meets $B$ in more than $p_0^{hs+1}-p_0^{hs-1}-p_0^{hs-2}-3p_0^{hs-3}$ points, and it follows from the previous lemma that each $(n-k+s)$-space is either small or large.

The following Lemma and its corollaries show that if all $(n-k)$-spaces meet a $k$-blocking set $B$ in $1$ mod $p_0$ points, then every subspace that intersects $B$, intersects it in $1$ mod $p_0$ points.

\begin{lemma}\label{e} Let $B$ be a small minimal $k$-blocking set in $\PG(n,p_0^h)$ and let $L$ be a line such that $1<\vert B\cap L \vert<p_0^h+1.$ For all $i\in \{1,\ldots,n-k\}$ there exists an $i$-space $\pi_i$ through $L$ such that $B\cap \pi_i=B\cap L$.
\end{lemma}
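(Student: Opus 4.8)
The plan is to build the spaces $\pi_i$ one projective dimension at a time, by induction on $i$, starting from $\pi_1=L$ (for which $B\cap\pi_1=B\cap L$ trivially). So I would fix an $i$ with $1\le i\le n-k-1$, assume an $i$-space $\pi_i\supseteq L$ with $B\cap\pi_i=B\cap L$ is already in hand, and try to produce an $(i+1)$-space $\pi_{i+1}\supseteq\pi_i$ with $B\cap\pi_{i+1}=B\cap L$; iterating this from $i=1$ up to $i=n-k-1$ then yields $\pi_2,\dots,\pi_{n-k}$ and covers all the claimed values of $i$. Since $B\cap L\subseteq B\cap\pi_{i+1}$ automatically, it suffices to find an $(i+1)$-space through $\pi_i$ meeting $B$ in exactly $|B\cap L|$ points, as equality of the sets then follows by cardinality. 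Thus the whole proof reduces to the inductive step, which I would handle by counting over the $(i+1)$-spaces through $\pi_i$.

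For the step, write $m=|B\cap L|=|B\cap\pi_i|$; by Theorem \ref{szonyi}(i) applied to $L$ we have $m\equiv 1\pmod{p_0}$, and by hypothesis $m\ge 2$. Let $\Sigma_1,\dots,\Sigma_N$ be the $(i+1)$-spaces through $\pi_i$, so $N=\frac{p_0^{h(n-i)}-1}{p_0^h-1}\ge p_0^{hk}+1$ because $i\le n-k-1$. Each point outside $\pi_i$ lies on exactly one $\Sigma_j$, and each $\Sigma_j$ contains $\pi_i$, so $\sum_{j=1}^N(|B\cap\Sigma_j|-m)=|B|-m$. I would then argue by contradiction: if no $\Sigma_j$ works, then $|B\cap\Sigma_j|>m$ for every $j$; since $\Sigma_j$ is a subspace with $|B\cap\Sigma_j|\ge m>0$, Theorem \ref{szonyi}(i) gives $|B\cap\Sigma_j|\equiv 1\pmod{p_0}$, so together with $m\equiv 1\pmod{p_0}$ the excess $|B\cap\Sigma_j|-m$ is a positive multiple of $p_0$, hence at least $p_0$. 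Summing, $|B|-m\ge Np_0\ge p_0(p_0^{hk}+1)$, which contradicts smallness of $B$ (which forces $|B|-m<\tfrac32(p_0^{hk}+1)-2$) as soon as $p_0\ge 2$. So some $\Sigma_j$ meets $B$ only in $B\cap L$, and I take $\pi_{i+1}=\Sigma_j$.

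The one delicate point is the final numerical comparison, and it works precisely because two ingredients are combined: the restriction $i\le n-k-1$ guarantees that there are at least about $p_0^{hk}$ spaces $\Sigma_j$ through $\pi_i$ (this is presumably why the statement stops at dimension $n-k$: for $i=n-k$ the count of $(i+1)$-spaces drops to roughly $p_0^{h(k-1)}$ and the argument breaks), while the $1\bmod p_0$ property of Theorem \ref{szonyi}(i) upgrades the excess on each \emph{bad} $\Sigma_j$ from $1$ to $p_0$. The product $p_0\cdot p_0^{hk}\ge 2p_0^{hk}$ then already beats the slack $\tfrac32(p_0^{hk}+1)$ allowed by $|B|<3(p_0^{hk}+1)/2$. (This estimate does not even need the convention $p_0\ge 7$ in force in this section; $p_0\ge 2$ is enough.)
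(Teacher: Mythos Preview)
Your proof is correct and follows essentially the same inductive counting argument as the paper's own proof. One small caveat: Theorem \ref{szonyi}(i) only yields the congruence modulo the prime $p$, not modulo $p_0=p^e$, but since your final inequality only needs the excess on each bad $\Sigma_j$ to be at least $2$ this is harmless (the paper itself uses $p$ rather than $p_0$ in its lower bound, and for the upper bound it invokes Lemma \ref{groottes} rather than the raw definition of ``small'' as you do).
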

\begin{proof} 
It follows from Theorem \ref{szonyi} that every subspace through $L$ intersects $B\setminus L$ in zero or at least $p$ points, where $p_0=p^e$, $p$ prime. We proceed by induction on the dimension $i$. The statement obviously holds for $i=1$. Suppose there exists an $i$-space $\Pi_i$ through $L$ such that $\Pi_i\cap B$=$L\cap B$, with $i\leq n-k-1$. If there is no $(i+1)$-space intersecting $B$ only in points of $L$, then the number of points of $B$ is at least
$$
|B\cap L|+p(p_0^{h(n-i-1)}+p_0^{h(n-i-2)}+\ldots+p_0^h+1),
$$
but by Lemma \ref{groottes} $\vert B \vert \leq p_0^{hk}+p_0^{hk-1}+p_0^{hk-2}+p_0^{hk-3}$. If $i<n-k$ this is a contradiction. 
We may conclude that there exists an $i$-space $\Pi_i$ through $L$ such that $B\cap L=B\cap \Pi_i$, $\forall i\in \{1,\ldots,n-k\}$.
\end{proof}

Using Lemma \ref{e}, the following corollaries follow easily.
\begin{corollary}{\rm (see also \cite[Corollary 3.11]{sz})}\label{rechte1modp} Every line meets a small minimal $k$-blocking set in $\PG(n,p^t)$, $p$ prime, with exponent $e$ in $1$ mod $p^e$ or zero points.
\end{corollary}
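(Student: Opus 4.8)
The plan is to obtain the corollary as an essentially immediate consequence of Lemma \ref{e}. Note first that Theorem \ref{szonyi}(i) already tells us that a line meets $B$ in $0$ or $1\pmod p$ points; the task is to upgrade this to $1\pmod{p^e}$, and the mechanism will be to replace $L$ by an $(n-k)$-space with the same $B$-intersection, for which the definition of the exponent gives the stronger congruence directly.

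Concretely, fix a line $L$ of $\PG(n,p^t)$ and put $p_0=p^e$. Recalling that the exponent $e$ of a small minimal blocking set divides $t$ (so that $\F_{p^e}$ is a subfield of $\F_{p^t}$), we may write $q=p^t=p_0^h$ with $h=t/e$ and regard $B$ as a small minimal $k$-blocking set of $\PG(n,p_0^h)$ in the sense of Lemma \ref{e}. If $|B\cap L|\in\{0,1\}$ there is nothing to prove, and if $L\subseteq B$ then $|B\cap L|=p^t+1\equiv 1\pmod{p^e}$ since $e\le t$; so we may assume $1<|B\cap L|<p_0^h+1$, which is exactly the hypothesis of Lemma \ref{e}.

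Since $n>k$, the value $i=n-k$ is admissible in Lemma \ref{e}, which therefore yields an $(n-k)$-space $\pi$ through $L$ with $B\cap\pi=B\cap L$. By the very definition of the exponent, every $(n-k)$-space meets $B$ in $1\pmod{p^e}$ points; hence $|B\cap L|=|B\cap\pi|\equiv 1\pmod{p^e}$, as required.

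All of the real content is already packaged in Lemma \ref{e}, so there is no genuine obstacle. The only steps meriting a word of care are the reduction to the range $1<|B\cap L|<q+1$ (so that Lemma \ref{e} applies, the case $L\subseteq B$ being handled directly) and the invocation of the standing fact $\F_{p^e}\subseteq\F_{p^t}$, which is what allows $B$ to be viewed inside $\PG(n,p_0^h)$ in the first place.
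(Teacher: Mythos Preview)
Your argument is correct and follows exactly the paper's route: extend $L$ to an $(n-k)$-space $\pi$ via Lemma~\ref{e} and then read off the $1\pmod{p^e}$ congruence from the definition of the exponent. The only quibble is that your invocation of $e\mid t$ is both unnecessary (Lemma~\ref{e} applies with $p_0^h=p^t$ regardless) and not yet established at this point in the paper; you can simply drop that sentence.
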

\begin{proof} Suppose the line $L$ meets the small minimal $k$-blocking set in $x$ points, where $1\leq x\leq p^t$. By Lemma \ref{e}, the line $L$ is contained in an $(n-k)$-space $\pi$ such that $B\cap \pi=B\cap L$. Since every $(n-k)$-space meets the $k$-blocking set $B$ with exponent $e$ in $1$ mod $p^e$ points, the corollary follows.
\end{proof}
By considering all lines through a certain point of $B$ in some subspace, we get the following corollary.
\begin{corollary}{\rm (see also \cite[Corollary 3.11]{sz})} Every subspace meets a small minimal $k$-blocking set in $\PG(n,p^t)$, $p$ prime, with exponent $e$ in $1$ mod $p^e$ or zero points.
\end{corollary}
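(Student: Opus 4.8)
The plan is to reduce the statement to the one-dimensional case, which is precisely Corollary~\ref{rechte1modp}, by a double count of the points of $B$ inside the subspace along the lines through a fixed point of $B$ — exactly the ``all lines through a point of $B$'' idea announced just before the statement.

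First I would dispose of the trivial case: if the subspace $\Pi$ is disjoint from $B$, there is nothing to prove. So assume $B\cap\Pi\neq\emptyset$ and fix a point $P\in B\cap\Pi$. The key observation is that every line $L$ of $\Pi$ through $P$ meets $B$ in $1$ mod $p^e$ or zero points by Corollary~\ref{rechte1modp}; since $P\in B\cap L$, the value $0$ is impossible, so $|B\cap L|\equiv 1\pmod{p^e}$ for each such line. Next I would use that the lines of $\Pi$ through $P$ pairwise intersect only in $P$ and cover $\Pi$, so that $(B\cap\Pi)\setminus\{P\}$ is the disjoint union of the sets $(B\cap L)\setminus\{P\}$, with $L$ ranging over the lines of $\Pi$ through $P$. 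This gives
$$|B\cap\Pi| \;=\; 1+\sum_{L\ni P,\; L\subseteq\Pi}\bigl(|B\cap L|-1\bigr).$$
Each summand on the right is divisible by $p^e$ by the previous step, hence $|B\cap\Pi|\equiv 1\pmod{p^e}$, which is the assertion.

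I do not expect any genuine obstacle here: once Corollary~\ref{rechte1modp} is in hand, the result is a one-line partition argument, the only point worth a remark being that no line through $P$ can miss $B$, since $P$ itself lies on it. (Alternatively one could run an induction on $\dim\Pi$ with Corollary~\ref{rechte1modp} as the base case, but the single-step line count above is cleaner and yields the congruence directly regardless of the number of lines through $P$.)
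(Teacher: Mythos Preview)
Your proposal is correct and follows exactly the approach indicated in the paper: the paper's proof is just the sentence ``By considering all lines through a certain point of $B$ in some subspace, we get the following corollary,'' and your partition of $(B\cap\Pi)\setminus\{P\}$ along the lines of $\Pi$ through $P$ is precisely that argument written out in full.
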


\section{On the $(p_0+1)$-secants to a small minimal $k$-blocking set}

In this section, we show that Theorem \ref{sziklai} on planar blocking sets can be extended to a similar result on $k$-blocking sets in $\PG(n,q)$. 

\begin{lemma}\label{lemma6} Let $B$ be a small minimal $k$-blocking set with exponent $e$ in $\PG(n,p_0^h)$, $p_0:=p^e\geq 7$, $p$ prime, $\mathbf{ n \geq 2k+1}$. The number of points, not in $B$, that do not lie on a secant line to $B$ is at least  $$(p_0^{h(n+1)}-1)/(p_0^h+1)-(p_0^{2hk-2}+2p_0^{2hk-3})(p_0^h+1)-p_0^{hk}-p_0^{hk-1}-p_0^{hk-2}-3p_0^{hk-3},$$ and this number is larger than the number of points in $\PG(n-1,p_0^h)$.
\end{lemma}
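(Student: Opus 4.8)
The plan is a double counting over the secant lines of $B$ (the lines meeting $B$ in at least two points), fed by the size estimates already available. First, by Corollary~\ref{rechte1modp} (which applies with $p^e=p_0$), every line of $\PG(n,p_0^h)$ meets $B$ in $1\bmod p_0$ points or in none, so a secant line meets $B$ in at least $p_0+1$ points and hence contains at most $p_0^h-p_0<p_0^h+1$ points outside $B$. Next I would bound the number $s$ of secant lines: every pair of distinct points of $B$ lies on a unique line, which is automatically secant, so $\sum_{\ell}\binom{|\ell\cap B|}{2}=\binom{|B|}{2}$, the sum ranging over secant lines $\ell$. As each summand is at least $\binom{p_0+1}{2}=p_0(p_0+1)/2$, we get $s\leq|B|(|B|-1)/\bigl(p_0(p_0+1)\bigr)$. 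Substituting the bound $|B|<p_0^{hk}+p_0^{hk-1}+p_0^{hk-2}+3p_0^{hk-3}$ of Lemma~\ref{groottes} and simplifying — this is the place where $p_0\geq 7$ is used, to absorb the lower-order terms — one obtains $s<p_0^{2hk-2}+2p_0^{2hk-3}$.

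Consequently, the number of points outside $B$ that lie on at least one secant line is at most $s\,(p_0^h+1)<(p_0^{2hk-2}+2p_0^{2hk-3})(p_0^h+1)$. Since $\PG(n,p_0^h)$ has $(p_0^{h(n+1)}-1)/(p_0^h-1)\geq(p_0^{h(n+1)}-1)/(p_0^h+1)$ points, subtracting this quantity together with $|B|$ from the total number of points yields exactly the asserted lower bound for the number of points that are neither in $B$ nor on a secant line.

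It remains to check that this lower bound exceeds $(p_0^{hn}-1)/(p_0^h-1)$, the number of points of $\PG(n-1,p_0^h)$. The leading positive term is of order $p_0^{hn}$, the largest subtracted term $(p_0^{2hk-2}+2p_0^{2hk-3})(p_0^h+1)$ is of order $p_0^{h(2k+1)-2}$, and the right-hand side is of order $p_0^{h(n-1)}$; since $n\geq 2k+1$ we have $hn\geq h(2k+1)>h(2k+1)-2$ and $hn>h(n-1)$, so for $p_0\geq 7$ the positive term dominates all the others with room to spare. There is no conceptual obstacle in any of this; the only delicate points are the two polynomial inequalities flagged above (the estimate for $s$ and this final comparison), whose verification is routine provided the hypotheses $p_0\geq 7$ and $n\geq 2k+1$ are kept in view — the latter being precisely what makes $\PG(n,p_0^h)$ too large to be covered by the comparatively few secant lines of $B$.
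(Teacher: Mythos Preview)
Your argument is correct and follows essentially the same route as the paper: bound the number of secant lines by $|B|(|B|-1)/(p_0(p_0+1))$ via Corollary~\ref{rechte1modp}, insert the size bound from Lemma~\ref{groottes} to get at most $p_0^{2hk-2}+2p_0^{2hk-3}$ secants, multiply by $p_0^h+1$ for the points they cover, and subtract together with $|B|$ from the total. Your explicit remark that $(p_0^{h(n+1)}-1)/(p_0^h-1)\geq(p_0^{h(n+1)}-1)/(p_0^h+1)$ is a useful addition, since the displayed bound in the statement indeed uses the latter, weaker denominator.
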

\begin{proof} By Corollary \ref{rechte1modp}, the number of secant lines to $B$ is at most $\frac{|B|(|B|-1)}{(p_0+1)p_0}$.  By Lemma \ref{lemma5}, the number of points in $B$ is at most $p_0^{hk}+p_0^{hk-1}+p_0^{hk-2}+3p_0^{hk-3}$, hence the number of secant lines is at most $p_0^{2hk-2}+2p_0^{2hk-3}$. This means that the number of points on at least one secant line is at most $(p_0^{2hk-2}+2p_0^{2hk-3})(p_0^h+1)$. It follows that the number of points in $\PG(n,p_0^h)$, not in $B$, not on a secant to $B$ is at least $(p_0^{h(n+1)}-1)/(p_0^h+1)-(p_0^{2hk-2}+2p_0^{2hk-3})(p_0^h+1)-p_0^{hk}-p_0^{hk-1}-p_0^{hk-2}-3p_0^{hk-3}$. Since we assume that $n\geq 2k+1$ and $p_0\geq 7$, the last part of the statement follows.
\end{proof}

 We first extend Theorem \ref{sziklai} (i) to $1$-blocking sets in $\PG(n,q)$.

\begin{lemma} \label{lemma1} A point of a small minimal $1$-blocking set $B$ with exponent $e$ in $\PG(n,p_0^h)$, $p_0:=p^e\geq 7$, $p$ prime, lying on a $(p_0+1)$-secant, lies on at least $p_0^{h-1}-4p_0^{h-2}+1$ $(p_0+1)$-secants.
\end{lemma}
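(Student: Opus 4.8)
We prove the lemma by induction on $n$: the base case $n=2$ is essentially Theorem~\ref{sziklai}(i), and for $n\geq 3$ we project $B$ from a carefully chosen point onto a hyperplane and apply Theorem~\ref{szonyi}(iii). Write $q=p_0^h$. We may assume $h\geq 2$ (if $h=1$ a $(p_0+1)$-secant is a whole line, so $B$ is trivial and the statement is vacuous or immediate) and, replacing $\PG(n,q)$ by the span of $B$, that $B$ spans $\PG(n,q)$. Fix a $(p_0+1)$-secant $L$ through $P$. Throughout we use Corollary~\ref{rechte1modp} (every line meets $B$ in $1\bmod p_0$ or $0$ points) and Lemma~\ref{groottes} ($|B|<q+q/p_0+q/p_0^2+3q/p_0^3$). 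For the base case $n=2$, writing $|B|=q+\kappa$, Lemma~\ref{groottes} gives $\kappa<q/p_0+q/p_0^2+3q/p_0^3$; substituting this into the lower bound $q/p_0-3(\kappa-1)/p_0+2$ of Theorem~\ref{sziklai}(i) and using $p_0\geq 7$ (so that $p_0^2-3p_0-9>0$) shows, after a short computation, that $P$ lies on at least $p_0^{h-1}-4p_0^{h-2}+1$ $(p_0+1)$-secants.

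Now let $n\geq 3$. The first step is to pick a good centre of projection. We claim there is a plane $\pi'$ through $L$ with $B\cap\pi'=L\cap B$. Indeed, there are $(q^{n-1}-1)/(q-1)\geq q+1$ planes through $L$; each meets $B$ in $1\bmod p_0$ points and in at least $p_0+1$ of them, hence in exactly $p_0+1$ or in at least $2p_0+1$ points, and since there are only $|B|-(p_0+1)$ points of $B$ off $L$, at most $(|B|-p_0-1)/p_0<q+1$ planes through $L$ are of the second type. Inside such a clean plane $\pi'$ the $q^2$ points of $\pi'\setminus L$ all lie outside $B$, and at most $\tfrac{|B|(|B|-1)}{(p_0+1)p_0}<q^2$ of them lie on a secant of $B$, because any secant other than $L$ meets $\pi'$ in at most one point while $B$ has fewer than $\tfrac{|B|(|B|-1)}{(p_0+1)p_0}$ secants by Corollary~\ref{rechte1modp} (this is the point where the counting underlying Lemma~\ref{lemma6} enters). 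Hence $\pi'$ contains a point $R\notin B$ lying on no secant of $B$, and $\langle R,L\rangle=\pi'$.

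Project $B$ from $R$ onto a hyperplane $H\not\ni R$, obtaining $\bar B\subseteq H\cong\PG(n-1,q)$. Since $R$ lies on no secant, the projection is injective on $B$, so $|\bar B|=|B|$ and, by Theorem~\ref{szonyi}(iii), $\bar B$ is a small minimal $1$-blocking set of $H$. For every hyperplane $\bar\Pi$ of $H$ the cone $\langle R,\bar\Pi\rangle$ is an $(n-1)$-space of $\PG(n,q)$, and injectivity gives $|\bar B\cap\bar\Pi|=|B\cap\langle R,\bar\Pi\rangle|\equiv 1\bmod p_0$; moreover the $p_0+1$ points of $L\cap B$ map onto $p_0+1$ collinear points of $\bar B$ on the line $\bar L:=\pi'\cap H$, so $\bar L$ is a $(p_0+1)$-secant of $\bar B$. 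Thus $\bar B$ has exponent $\geq e$, and since a $(p_0+1)$-secant forces (via Corollary~\ref{rechte1modp} applied to $\bar B$) the exponent to divide $p_0=p^e$, the exponent of $\bar B$ is exactly $e$. By induction the image $\bar P$ of $P$ lies on at least $p_0^{h-1}-4p_0^{h-2}+1$ $(p_0+1)$-secants of $\bar B$. Given such a secant $\bar M\ni\bar P$, the plane $\tau:=\langle R,\bar M\rangle$ contains $R$ and $P$, and projection from $R$ maps $B\cap\tau$ bijectively onto $\bar B\cap\bar M$, so $|B\cap\tau|=p_0+1$ and $P\in B\cap\tau$; for $z\in(B\cap\tau)\setminus\{P\}$ the line $\langle P,z\rangle\subseteq\tau$ meets $B$ in at least two and hence (Corollary~\ref{rechte1modp}) in at least $p_0+1$ points, but in at most $|B\cap\tau|=p_0+1$, so $B\cap\tau=\langle P,z\rangle\cap B$ is a $(p_0+1)$-secant through $P$. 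Distinct $\bar M$ give distinct planes $\tau$ through the line $\langle R,P\rangle$, and the resulting secant is never $\langle R,P\rangle$ itself (which meets $B$ in at most one point), so distinct $\bar M$ give distinct $(p_0+1)$-secants through $P$; this yields the required count and closes the induction.

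The real work is in the inductive step: one must produce a projection centre $R$ that is simultaneously \emph{free} — off $B$ and off every secant, so that the projection is injective and the exponent is preserved — and placed so that the given secant $L$ survives downstairs as a $(p_0+1)$-secant; this is exactly what the clean-plane-through-$L$ count together with Lemma~\ref{lemma6} provides. The collinearity of the lifted point sets $B\cap\tau$, which at first sight looks like the obstruction, is in fact automatic once $|B\cap\tau|=p_0+1$, since a line joining two of its points already carries all $p_0+1$ by Corollary~\ref{rechte1modp}. The only remaining delicate point is arithmetical: verifying that for $p_0\geq 7$ the planar bound of Theorem~\ref{sziklai}(i), fed with the size estimate of Lemma~\ref{groottes}, really does yield $p_0^{h-1}-4p_0^{h-2}+1$.
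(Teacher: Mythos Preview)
Your proof is correct and follows essentially the same strategy as the paper: induction on $n$, with the planar base case handled via Theorem~\ref{sziklai}(i) together with the size bound of Lemma~\ref{groottes}, and the inductive step carried out by projecting from a point off every secant (the paper invokes Lemma~\ref{lemma6} directly for this, whereas you locate the centre inside a clean plane through $L$). Your write-up is in fact more careful than the paper's in two places---checking that the exponent of $\bar B$ is again $e$, and verifying that each $(p_0+1)$-secant of $\bar B$ through $\bar P$ really lifts to a $(p_0+1)$-secant of $B$ through $P$---points the paper dismisses with ``it is clear''.
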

\begin{proof} 
We proceed by induction on the dimension $n$. If $n=2$, by Theorem \ref{sziklai}, the number of $(p_0+1)$-secants through $P$ is at least $q/p_0-3(\kappa-1)/p_0+2$, where $|B|=q+\kappa$. By Lemma \ref{lemma5}, $\kappa$ is at most $p_0^{h-1}+p_0^{h-2}+3p_0^{h-3}$, which means that the number of $(p_0+1)$-secants is at least $p_0^{h-1}-4p_0^{h-2}+1$. This proves the statement for $n=2$.

Now assume $n\geq 3$. From Lemma \ref{lemma6} (observe that, since $n\geq 3$ and $k=1$, $n\geq 2k+1$), we know that there is a point $Q$, not lying on a secant line to $B$. Project $B$ from the point $Q$ onto a hyperplane through $P$ and not through $Q$. It is clear that the number of $(p_0+1)$-secants through $P$ to the projection of $B$ is the number of $(p_0+1)$-secants through $P$ to $B$. By the induction hypothesis, this number is at least $p_0^{h-1}-4p_0^{h-2}+1$.
\end{proof}

\begin{lemma}\label{large} Let $\Pi$ be an $(n-k)$-space of $\PG(n,p_0^h)$, $k>1$, $p_0\geq 7$. If $\Pi$ intersects a small minimal $k$-blocking set $B$ with exponent $e$ in $\PG(n,p_0^h)$, $p_0:=p^e\geq 7$, $p$ prime in $p_0+1$ points, then there are at most $3p_0^{hk-h-3}$ large $(n-k+1)$-spaces through $\Pi$.
\end{lemma}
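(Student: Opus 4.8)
The plan is to pass to the quotient $\PG(n,p_0^h)/\Pi$, which is a $\PG(k-1,p_0^h)$: the $(n-k+1+j)$-spaces of $\PG(n,p_0^h)$ containing $\Pi$ correspond bijectively to the $j$-spaces of $\PG(k-1,p_0^h)$, and since two distinct $(n-k+1)$-spaces through $\Pi$ meet exactly in $\Pi$, the sets $B\cap(\Sigma\setminus\Pi)$ partition $B\setminus\Pi$ as $\Sigma$ runs over the $(n-k+1)$-spaces through $\Pi$. Assign to the point of $\PG(k-1,p_0^h)$ corresponding to $\Sigma$ the weight $w(\Sigma):=|B\cap\Sigma|-(p_0+1)=|B\cap(\Sigma\setminus\Pi)|$. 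Then for every $(n-k+1+j)$-space $\Gamma\supseteq\Pi$ one has $\sum_{\Sigma\subseteq\Gamma}w(\Sigma)=|B\cap\Gamma|-(p_0+1)$, and by Lemma \ref{lemma5} this number lies in a ``small'' or a ``large'' range whenever $j\le k-2$; moreover $\sum_{\Sigma}w(\Sigma)=|B|-(p_0+1)<p_0^{hk}+p_0^{hk-1}+p_0^{hk-2}+3p_0^{hk-3}$. The large $(n-k+1)$-spaces through $\Pi$ are exactly the points of weight $>\lambda:=p_0^{h+1}-p_0^{h-1}-p_0^{h-2}-3p_0^{h-3}-(p_0+1)$, and we must bound their number $N$.

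Counting points of $B$ gives at once $N\lambda<|B|$, hence $N=O(p_0^{hk-h-1})$, which is weaker than the required $3p_0^{hk-h-3}$ by a factor of order $p_0^2$; the substance of the proof is to gain this factor. I would do so by tracking how the large $(n-k+1)$-spaces sit inside the larger subspaces through $\Pi$. A \emph{small} $(n-k+1+j)$-space $\Gamma\supseteq\Pi$ satisfies $|B\cap\Gamma|<p_0^{h(j+1)}+p_0^{h(j+1)-1}+p_0^{h(j+1)-2}+3p_0^{h(j+1)-3}$, so contains at most about $p_0^{hj-1}$ large $(n-k+1)$-spaces; a \emph{large} $(n-k+1+j)$-space through $\Pi$ has $|B\cap\Gamma|>p_0^{h(j+1)+1}-\ldots$, and the number of these through $\Pi$ is controlled, again from the bound on $|B|$ (and, for $j\ge1$, from the way such spaces overlap along $\Pi$). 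Running an incidence count between the large $(n-k+1)$-spaces through $\Pi$ and the hyperplanes through $\Pi$ (each large $(n-k+1)$-space lying in $\tfrac{p_0^{h(k-1)}-1}{p_0^h-1}$ of them) then produces the bound $N\le 3p_0^{hk-h-3}$ --- but only after the per-space lower bound $\lambda$ has been improved.

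Improving $\lambda$ is the main obstacle, and this is where the hypothesis $|B\cap\Pi|=p_0+1$ enters decisively: a large $(n-k+1)$-space $\Sigma$ is one for which $B\cap\Sigma$ is a hyperplane-blocking set of $\Sigma\cong\PG(n-k+1,p_0^h)$ lying outside the small range and possessing a hyperplane, namely $\Pi$, that meets it in only $p_0+1$ points; combined with the facts that every subspace meets $B$ in $1\bmod p_0$ points and that $B$ may be assumed non-trivial (so $B\cap\Sigma$ contains no hyperplane of $\Sigma$), this should force $|B\cap\Sigma|$ to be substantially larger than $\lambda$, uniformly in $h$, which is exactly what the incidence count above needs. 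An alternative route is to project $B$ from a point $Q\in\Pi\setminus B$ chosen off the $(p_0+1)$-secant in $\Pi$ onto a hyperplane not through $Q$: by Theorem \ref{szonyi}(iii) this yields a small minimal $k$-blocking set $\bar B$ in $\PG(n-1,p_0^h)$ together with an $(n-1-k)$-space meeting $\bar B$ in $p_0+1$ points, the large $(n-k+1)$-spaces through $\Pi$ mapping to large $(n-k)$-spaces through the image, so that one can induct on $n$; the delicate point there is to verify that largeness is adequately preserved under the projection. Either way, once the sharpened per-space estimate is in hand, substituting the numerical values (using $p_0\ge7$) into the counting inequality yields the stated bound.
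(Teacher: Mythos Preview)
Your quotient setup is correct, and you rightly observe that the crude bound $N\lambda<|B|$ gives only $N=O(p_0^{hk-h-1})$, two powers of $p_0$ short of the claim. But the two routes you sketch for recovering that factor are not carried out, and the first is aimed in the wrong direction.

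You propose to sharpen the lower bound $\lambda$ on $|B\cap\Sigma|$ for \emph{large} $\Sigma$. There is no mechanism for this: when $\Sigma$ is large, $B\cap\Sigma$ is a $1$-blocking set in $\Sigma$ that is \emph{not} small, so neither Corollary~\ref{grootte} nor any of the structural results for small blocking sets applies to it, and a $(p_0+1)$-point hyperplane section does not by itself push $|B\cap\Sigma|$ noticeably past the large threshold. Your projection alternative lacks both a base case (at $n=k+1$ one cannot descend further) and a verification that largeness survives projection.

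The paper gains the missing $p_0^2$ from the \emph{small} $(n-k+1)$-spaces instead, and this is precisely where the hypothesis $|B\cap\Pi|=p_0+1$ enters. If $\Sigma\supset\Pi$ is small, then $B\cap\Sigma$ is a small $1$-blocking set in $\Sigma$ which is \emph{non-trivial}: were it a line, that line would meet the hyperplane $\Pi$ of $\Sigma$ in $1$ or $p_0^h+1$ points, never in $p_0+1$. Hence by Corollary~\ref{grootte} every small $\Sigma$ carries at least $p_0^h+p_0^{h-1}-p_0^{h-2}$ points of $B$, not merely $p_0^h+1$. Summed over the $\tfrac{p_0^{hk}-1}{p_0^h-1}$ spaces through $\Pi$, this extra $\approx p_0^{h-1}$ per small space already absorbs the $p_0^{hk-1}$ term in the upper bound for $|B|$ from Lemma~\ref{lemma5}; what is left forces $y\cdot p_0^{h+1}\lesssim p_0^{hk-2}$, i.e.\ $y\le 3p_0^{hk-h-3}$. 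The whole argument is the single inequality
\[
y\bigl(p_0^{h+1}-p_0^{h-1}-p_0^{h-2}-3p_0^{h-3}-p_0-1\bigr)
+\Bigl(\tfrac{p_0^{hk}-1}{p_0^h-1}-y\Bigr)\bigl(p_0^h+p_0^{h-1}-p_0^{h-2}-p_0-1\bigr)+p_0+1\le|B|,
\]
with no hyperplane double-count and no induction on $n$.
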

\begin{proof}

Suppose there are $y$ large $(n-k+1)$-spaces through $\Pi$. A small $(n-k+1)$-space through $\Pi$ meets $B$ clearly in a small $1$-blocking set, which is in this case, non-trivial and hence, by Theorem \ref{grootte}, has at least $p_0^h+p_0^{h-1}-p_0^{h-2}$ points.

Then the number of points in $B$ is at least
$$ y(p_0^{h+1}-p_0^{h-1}-p_0^{h-2}-3p_0^{h-3}-p_0-1)+$$
$$((p_0^{hk}-1)/(p_0^h-1)-y)(p_0^h+p_0^{h-1}-p_0^{h-2}-p_0-1)+p_0+1\ (\ast)$$
which is at most $p_0^{hk}+p_0^{hk-1}+p_0^{hk-2}+3p_0^{hk-3}$. This yields $y\leq  3p_0^{hk-h-3}$.

\end{proof}

\begin{theorem}\label{aantalsecanten} A point of a small minimal $k$-blocking set $B$ with exponent $e$ in $\PG(n,p_0^h)$, $p_0:=p^e\geq 7$, $p$ prime, $k>1$, lying on a $(p_0+1)$-secant, lies on at least  $((p_0^{hk}-1)/(p_0^h-1)-3p_0^{hk-h-3})(p_0^{h-1}-4p_0^{h-2})+1$  $(p_0+1)$-secants.
\end{theorem}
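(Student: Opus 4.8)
The plan is to reduce the statement for $k$-blocking sets to the already-established case $k=1$ (Lemma \ref{lemma1}), by slicing the configuration with $(n-k+1)$-spaces through a well-chosen $(n-k)$-space. Let $P$ be a point of $B$ lying on a $(p_0+1)$-secant line $L$. First I would use Remark \ref{trans}-type transitivity together with the fact that a small minimal $k$-blocking set meets every $(n-k)$-space in $1\bmod p_0$ points (the Corollary after Lemma \ref{e}) to locate an $(n-k)$-space $\Pi$ through $P$ with $|B\cap\Pi|=p_0+1$: concretely, take the line $L$ and apply Lemma \ref{e} to find an $(n-k)$-space $\pi_{n-k}$ through $L$ with $B\cap\pi_{n-k}=B\cap L$, so $\Pi:=\pi_{n-k}$ meets $B$ in exactly the $p_0+1$ points of $L$, all of which (other than $P$) lie on the secant $L\subset\Pi$. (If $k=n$, or more generally if no such $\Pi$ of the right dimension exists, the hypothesis $k>1$ and $n>k$ should still give room; this is a point to check.)

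Next I would count $(p_0+1)$-secants through $P$ by organising the $(n-k+1)$-spaces through $\Pi$. There are $(p_0^{hk}-1)/(p_0^h-1)$ such $(n-k+1)$-spaces, and by Lemma \ref{large} at most $3p_0^{hk-h-3}$ of them are large; hence at least $(p_0^{hk}-1)/(p_0^h-1)-3p_0^{hk-h-3}$ of them are small. For each small $(n-k+1)$-space $\Sigma$ through $\Pi$, the intersection $B\cap\Sigma$ is a small $1$-blocking set of $\Sigma\cong\PG(n-k+1,p_0^h)$ (it blocks every $(n-k)$-subspace of $\Sigma$ and has size below the small threshold), it has the same exponent $e$, and it contains $P$ on the $(p_0+1)$-secant $L$. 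Applying Lemma \ref{lemma1} inside $\Sigma$ gives at least $p_0^{h-1}-4p_0^{h-2}+1$ $(p_0+1)$-secants through $P$ lying in $\Sigma$; discarding the one secant $L$ that lies in $\Pi$ (common to all these $\Sigma$'s), each small $\Sigma$ contributes at least $p_0^{h-1}-4p_0^{h-2}$ further $(p_0+1)$-secants, and these are pairwise distinct across distinct $\Sigma$ because two distinct $(n-k+1)$-spaces through the common $(n-k)$-space $\Pi$ meet exactly in $\Pi$, while the new secants avoid $\Pi$. Summing over the small $\Sigma$'s and adding back $L$ yields the claimed bound
$$\left(\frac{p_0^{hk}-1}{p_0^h-1}-3p_0^{hk-h-3}\right)\left(p_0^{h-1}-4p_0^{h-2}\right)+1.$$

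The main obstacle I anticipate is the \emph{disjointness bookkeeping}: one must be sure that a $(p_0+1)$-secant produced inside a small $\Sigma$ is genuinely new, i.e. is not contained in $\Pi$ and is not shared with another $\Sigma'$ through $\Pi$. The cleanest way to handle this is the observation just made, that any line through $P$ not lying in $\Pi$ determines a unique $(n-k+1)$-space $\langle\Pi,\text{line}\rangle$ through $\Pi$, so the secants from different $\Sigma$'s are automatically distinct; and a $(p_0+1)$-secant lying \emph{in} $\Pi$ through $P$ would force $|B\cap\Pi|\ge p_0+1$ with that secant plus something, but since $B\cap\Pi$ is exactly the $p_0+1$ points of the single line $L$, the only such secant is $L$ itself. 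A secondary subtlety is verifying that every small $(n-k+1)$-space through $\Pi$ does meet $B$ in a \emph{non-trivial} small minimal $1$-blocking set (so that Lemma \ref{lemma1}, which presumes minimality, applies) — this follows because $\Sigma\cap B$ blocks all hyperplanes of $\Sigma$, its size is below the planar/higher small bound by the definition of "small", and Theorem \ref{szonyi}(ii) together with the $1\bmod p^e$ property gives minimality; triviality is impossible since $\Pi\cap B$ is only $p_0+1<p_0^h+1$ points. I would also double-check the harmless edge case where the count $p_0^{h-1}-4p_0^{h-2}$ could be non-positive, but $p_0=p^e\ge 7$ keeps it positive.
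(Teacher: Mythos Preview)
Your proposal is correct and follows essentially the same route as the paper's proof: use Lemma \ref{e} to find an $(n-k)$-space $\Pi$ through $L$ with $B\cap\Pi=B\cap L$, invoke Lemma \ref{large} to bound the number of large $(n-k+1)$-spaces through $\Pi$, observe via Theorem \ref{szonyi}(ii) that each small $(n-k+1)$-space $\Sigma$ meets $B$ in a small minimal $1$-blocking set, and apply Lemma \ref{lemma1} inside each such $\Sigma$. Your explicit disjointness argument (a line through $P$ not in $\Pi$ lies in a unique $(n-k+1)$-space through $\Pi$, and the only $(p_0+1)$-secant inside $\Pi$ is $L$) is more carefully spelled out than in the paper, which just states the final count; the reference to Remark \ref{trans} in your first paragraph is irrelevant and can be dropped.
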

\begin{proof} Let $P$ be a point on a $(p_0+1)$-secant $L$. By Lemma \ref{e}, there is an $(n-k)$-space $\Pi$ through $L$ such that $B\cap \Pi=B\cap L$.
Let $\Sigma$ be a small $(n-k+1)$-space. It is clear that the space $\Sigma$ meets $B$ in a small $1$-blocking set $B'$. Every $(n-k)$-space contained in $\Sigma$ meets $B'$ in $1$ mod $p_0$ points. By Theorem \ref{szonyi} (ii), $B'$ is a small minimal $1$-blocking set in $\Sigma$. For every small $(n-k+1)$-space $\Sigma_i$ through $\pi$, $P$ is a point in $\Sigma_i$, lying on a $(p_0+1)$-secant in $\Sigma_i$, and hence, by Lemma \ref{lemma1}, $P$ lies on at least $p_0^{h-1}-4p_0^{h-2}+1$ $(p_0+1)$-secants to $B$ in $\Sigma_i$. From Lemma \ref{large}, we get that the number of small $(n-k+1)$-spaces $\Sigma_i$ through $\Pi$ is at least $(p_0^{hk}-1)/(p_0^h-1)-3p_0^{hk-h-3}$, hence, the number of $(p_0+1)$-secants to $B$ through $P$ is at least $((p_0^{hk}-1)/(p_0^h-1)-3p_0^{hk-h-3})(p_0^{h-1}-4p_0^{h-2})+1$.
\end{proof}

We will now show that Theorem \ref{sziklai} (ii) can be extended to $k$-blocking sets in $\PG(n,q)$. We start with the case $k=1$.
\begin{lemma} \label{pplus1} Let $B$ be a small minimal $1$-blocking set with exponent $e$ in $\PG(n,q)$, $q=p^t$. If for a certain line $L$, $\vert L\cap B \vert=p^e+1$, then $\F_{p^e}$ is a subfield of $\F_q$ and $L\cap B$ is $\F_{p^e}$-linear.
\end{lemma}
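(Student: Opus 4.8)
The plan is to reduce the higher-dimensional statement to the planar case, Theorem \ref{sziklai} (ii), by projecting away the irrelevant dimensions. First I would observe that it suffices to produce a plane $\Sigma$ through the line $L$ whose intersection with $B$ is exactly $B\cap L$: then $B\cap \Sigma = B\cap L$ is a small minimal $1$-blocking set in $\Sigma \cong \PG(2,q)$ (minimality and smallness of $B\cap\Sigma$ follow because every line of $\Sigma$ meets $B\cap\Sigma$ in $1 \bmod p_0$ points, so Theorem \ref{szonyi} (ii) applies; alternatively invoke Theorem \ref{szonyi} (iii) repeatedly), it has exponent $e$ (every line meets it in $1 \bmod p^e$ points by Corollary \ref{rechte1modp} applied inside $\Sigma$, and $L$ is a $(p^e+1)$-secant so the exponent is not larger), and then Theorem \ref{sziklai} (ii) tells us $\F_{p^e}$ is a subfield of $\F_q$ and $B\cap L = B\cap\Sigma$ is $\F_{p^e}$-linear, which is exactly the conclusion.

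So the whole task is to find such a plane. For this I would apply Lemma \ref{e}: since $1 < |B\cap L| = p^e+1 < p_0^h+1$ (here $q = p_0^h$ with $p_0 = p^e$), Lemma \ref{e} with $k=1$ and $i=2$ — noting $i = 2 \leq n-k = n-1$ as soon as $n \geq 3$ — gives a plane $\pi_2$ through $L$ with $B\cap \pi_2 = B\cap L$. This $\pi_2$ is the desired $\Sigma$, and the argument above finishes the case $n \geq 3$.

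The remaining case is $n=2$, but there Lemma \ref{pplus1} is literally Theorem \ref{sziklai} (ii), so nothing needs to be done. I expect no serious obstacle here: the content is entirely the "untangling" step, and the only point requiring a little care is the verification that $B\cap\Sigma$ genuinely inherits the hypotheses of Theorem \ref{sziklai} (ii) — namely that it is small, minimal, and has exponent exactly $e$ — which is handled by Theorem \ref{szonyi} (ii)–(iii) and Corollary \ref{rechte1modp}. One should also double-check the edge case where $n-k = 1$ cannot occur here since $k=1$ forces $n \geq 2$, and $n=2$ is dealt with separately, so Lemma \ref{e} is only invoked when $n \geq 3$, where it legitimately produces a $2$-space.
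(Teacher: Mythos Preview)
There is a genuine gap. Your central claim---that if $\Sigma$ is a plane through $L$ with $B\cap\Sigma=B\cap L$, then $B\cap\Sigma$ is a small minimal $1$-blocking set in $\Sigma\cong\PG(2,q)$---is false. The set $B\cap L$ consists of $p^e+1$ collinear points, and in the interesting case $p^e<q$ this set does not block all lines of $\Sigma$: any line of $\Sigma$ other than $L$ through a point of $L\setminus(B\cap L)$ misses $B\cap L$ entirely. Hence Theorem~\ref{szonyi}(ii) does not apply (its hypothesis that every line meet the set in $1\bmod p$ points fails; Corollary~\ref{rechte1modp} only guarantees $1\bmod p^e$ \emph{or zero}), and Theorem~\ref{sziklai}(ii) cannot be invoked because you have no planar blocking set to feed into it. The ``alternatively invoke Theorem~\ref{szonyi}(iii) repeatedly'' remark does not rescue this, since that theorem concerns projections, not restrictions to a subspace.

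The paper's proof uses the same raw ingredient---a plane through $L$ meeting $B$ only on $L$---but exploits it differently: it selects a point $Q$ in that plane, off $L$ (hence $Q\notin B$), and projects $B$ from $Q$ onto a hyperplane $H\supset L$ not through $Q$. By Theorem~\ref{szonyi}(iii) the projection $\bar B$ is a genuine small minimal $1$-blocking set in $H\cong\PG(n-1,q)$, and because $\langle Q,L\rangle$ meets $B$ only on $L$ one has $\bar B\cap L=B\cap L$, so $L$ remains a $(p^e+1)$-secant to $\bar B$. Induction on $n$ then reduces to the planar case. Your argument can be repaired along exactly these lines: use Lemma~\ref{e} (or the direct counting the paper carries out) to find the plane, but then \emph{project} rather than \emph{restrict}.
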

\begin{proof} We proceed by induction on $n$. For $n=2$, the statement follows from Theorem \ref{linrechte} (ii), hence, let $n>2$. Let $L$ be a line, meeting $B$ in $p^e+1$ points and let $H$ be a hyperplane through $L$. A plane through $L$ containing a point of $B$, not on $L$, contains at least $p^{2e}$ points of $B$, not on $L$ by Theorem \ref{szonyi} (i). If all $q^{n-2}$ planes through $L$, not in $H$, contain an extra point of $B$, then $\vert B \vert \geq p^{2e}q^{n-2}$, which is larger than $p^h+p^{h-1}+p^{h-2}+3p^{h-3}$, a contradiction by Lemma \ref{groottes}. Let $Q$ be a point on a plane $\pi$ through $L$, not in $H$ such that $\pi$ meets $B$ only in points of $L$. The projection of $B$ onto $H$ is a small minimal $1$-blocking set $B'$ in $H$ (see Theorem \ref{szonyi} (iii)), for which $L$ is a $(p^e+1)$-secant. The intersection $B'\cap L$ is by the induction hypothesis an $\F_{p^e}$-linear set. Since $B\cap L=B'\cap L$, the statement follows.
\end{proof}

Finally, we extend Theorem \ref{sziklai} (ii) to a theorem on $k$-blocking sets in $\PG(n,q)$.
\begin{theorem} \label{pplus1k} Let $B$ be a small minimal $k$-blocking set with exponent $e$ in $\PG(n,q)$, $q=p^t$. If for a certain line $L$, $\vert L\cap B \vert=p^e+1$, $p^e\geq 7$, then $\F_{p^e}$ is a subfield of $\F_q$ and $L\cap B$ is $\F_{p^e}$-linear.
\end{theorem}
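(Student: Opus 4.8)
The plan is to reduce the case $k>1$ to the case $k=1$ (which is exactly Lemma~\ref{pplus1}) by cutting $B$ down to a suitable $(n-k+1)$-space. So assume $k>1$ (and $n>k$; the case $n=k$ is immediate, since then $B=\PG(n,q)$ and $|L\cap B|=q+1=p^e+1$ forces $q=p^e$, so that $L\cap B=L$ is an $\mathbb{F}_{p^e}$-subline).

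First I would use Lemma~\ref{e} to find an $(n-k)$-space $\Pi\supseteq L$ with $B\cap\Pi=B\cap L$, so that $|B\cap\Pi|=p_0+1$ with $p_0:=p^e$. Then I want an $(n-k+1)$-space $\Sigma\supseteq\Pi$ that is \emph{small} in the sense of the dichotomy following Lemma~\ref{lemma5}: by Lemma~\ref{large}, at most $3p_0^{hk-h-3}$ of the $(n-k+1)$-spaces through $\Pi$ are large, while there are $(p_0^{hk}-1)/(p_0^h-1)>3p_0^{hk-h-3}$ of them, so a small $\Sigma$ exists; note $L\subseteq\Pi\subseteq\Sigma$.

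Next I would verify that $B':=B\cap\Sigma$ is a small minimal $1$-blocking set of exponent exactly $e$ in $\Sigma\cong\PG(n-k+1,q)$, having $L$ as a $(p^e+1)$-secant. It blocks every hyperplane of $\Sigma$, since these are $(n-k)$-spaces of $\PG(n,q)$ and hence meet $B$; it is small because $\Sigma$ is a small $(n-k+1)$-space, whence $|B'|<p_0^h+p_0^{h-1}+p_0^{h-2}+3p_0^{h-3}<2q$; so it is minimal by Theorem~\ref{szonyi}~(ii). For its exponent $e'$: every hyperplane of $\Sigma$ meets $B'$ in $1\bmod p^e$ points, so $e'\geq e$, while $L$ meets $B'$ in $p^e+1\not\equiv 1\pmod{p^{e+1}}$ points, so Corollary~\ref{rechte1modp} applied inside $\Sigma$ forces $p^{e'}\mid p^e$, i.e.\ $e'\leq e$; thus $e'=e$. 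Now Lemma~\ref{pplus1} applied to $B'$ in $\Sigma$ gives that $\mathbb{F}_{p^e}$ is a subfield of $\mathbb{F}_q$ and $L\cap B'$ is $\mathbb{F}_{p^e}$-linear, and since $L\cap B=L\cap B'$, the theorem follows.

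Every step rests on a result already proven, so there is no deep obstacle; the two points I would treat with care are (i) that $\Sigma$ can genuinely be chosen small, which is precisely the purpose of Lemma~\ref{large}, and (ii) that the exponent does not jump up when passing from $B$ to the restriction $B'$ — this is the one mildly delicate bookkeeping point, and it is controlled by the fixed $(p_0+1)$-secant $L$ together with Corollary~\ref{rechte1modp}.
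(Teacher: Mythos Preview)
Your proof is correct and follows essentially the same route as the paper's own argument: use Lemma~\ref{e} to pass from $L$ to an $(n-k)$-space meeting $B$ in exactly $p_0+1$ points, invoke Lemma~\ref{large} to obtain a small $(n-k+1)$-space $\Sigma$ through it, observe that $B\cap\Sigma$ is a small minimal $1$-blocking set by Theorem~\ref{szonyi}~(ii), and then apply Lemma~\ref{pplus1}. Your write-up is in fact more careful than the paper's: you explicitly verify that the exponent of $B\cap\Sigma$ equals $e$ (using Corollary~\ref{rechte1modp} together with the fixed $(p_0+1)$-secant $L$), a point the paper leaves implicit but which is needed to match the hypothesis of Lemma~\ref{pplus1}.
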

\begin{proof} Let $L$ be a $p^e+1$-secant to $B$. By Lemma \ref{large}, there is at least one small $(n-k+1)$-space $\Pi$ through $L$. Since $\Pi\cap B$ is a small $1$-blocking set to $B$, and every $(n-k)$-space, contained in $\Pi$ meets $B$ in $1$ mod $p^e$ points, by Theorem \ref{szonyi} (ii), $B$ is minimal. By Lemma \ref{pplus1}, $L\cap B$ is an $\F_{p^e}$-linear set.
\end{proof}

\section{The proof of Main Theorem 1}

In this section, we will prove Main Theorem 1, that, roughly speaking, states that if we can prove the linearity for $k$-blocking sets in $\PG(n,q)$ for a certain value of $n$, then it is true for all $n$. It is clear from the definition of a $k$-blocking set that we can only consider $k$-blocking sets in $\PG(n,q)$ where $1\leq k\leq n-1$, and whenever we use the notation $k$-blocking set in $\PG(n,q)$, we assume that the above condition is satisfied.

\begin{itemize}
\item[]
From now on, if we want to state that for the pair $(k,n^*)$,  all small minimal $k$-blocking sets in $\PG(n^*,q)$ are linear, we say that the condition ($H_{k,n^*}$) holds.
\end{itemize}
To prove Main Theorem 1, we need to show that if ($H_{k,n^\ast})$ holds, then  $(H_{k,n})$ holds for all $n\geq k+1$.
The following observation shows that we only have to deal with the case $n\geq n^\ast$.
\begin{lemma} \label{simpel}If ($H_{k,n^\ast})$ holds, then $(H_{k,n})$ holds for all $n$ with $k+1\leq n\leq n^\ast$.
\end{lemma}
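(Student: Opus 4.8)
The plan is to reduce a small minimal $k$-blocking set in the smaller space $\PG(n,q)$ with $k+1\le n\le n^\ast$ to one in the larger space $\PG(n^\ast,q)$, where hypothesis $(H_{k,n^\ast})$ applies, and then pull the linearity back down. The natural device is to embed $\PG(n,q)$ as a subspace of $\PG(n^\ast,q)$ and extend $B$ to a $k$-blocking set of the ambient space by adding, for each point of $B$, a suitable ``cone'' in the extra $n^\ast-n$ dimensions, in such a way that the result is again small and minimal. Concretely, I would take a complementary $(n^\ast-n-1)$-space $\Sigma$ skew to $\PG(n,q)$ and form a cone from a subspace of $\Sigma$ over $B$; one must check that a cone of the right dimension over a $k$-blocking set of $\PG(n,q)$ is a $k$-blocking set of $\PG(n^\ast,q)$ (every $(n^\ast-k)$-space meets the base in an $(n-k)$-dimensional-enough space, hence meets $B$).

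First I would make the embedding precise: write $\PG(n^\ast,q)=\langle \PG(n,q),\Sigma\rangle$ with $\Sigma$ an $(n^\ast-n-1)$-space skew to $\PG(n,q)$, and set $B^\ast$ to be the cone with vertex $\Sigma$ and base $B$, i.e.\ $B^\ast=\bigcup_{P\in B}\langle P,\Sigma\rangle$ — or, more likely what is needed, a cone whose vertex is a $(k-1)$-subspace arrangement so that the $k$-dimensional count works out; I would pick the vertex dimension so that $|B^\ast|=q^{k}\cdot(\text{size of a suitable projective space})+\dots$ stays below $3(q^k+1)/2$ only if $B$ already is — so in fact the cleaner route is the reverse: do \emph{not} inflate $B$, but instead realize $\PG(n,q)$-blocking sets as \emph{projections}. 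Since Theorem~\ref{szonyi}(iii) says the projection of a small minimal $k$-blocking set from a point onto a hyperplane is again small minimal, one can try to show every small minimal $k$-blocking set of $\PG(n,q)$ \emph{is} such a projection of one in $\PG(n+1,q)$, iterate up to $n^\ast$, apply $(H_{k,n^\ast})$, and use that the image of a linear set under projection is linear (hence conversely a set whose projection is linear and which is ``in general position'' is linear).

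So the key steps in order: (1) embed $\PG(n,q)\subset\PG(n^\ast,q)$; (2) given a small minimal $k$-blocking set $B$ in $\PG(n,q)$, construct a small minimal $k$-blocking set $\tilde B$ in $\PG(n^\ast,q)$ whose projection (from a suitable $(n^\ast-n-1)$-dimensional center skew to $\PG(n,q)$, or by iterated point-projections) onto $\PG(n,q)$ equals $B$ — the cone construction $\tilde B=\{$points of $B\} \cup (\text{cone filler})$ chosen minimal and small; (3) invoke $(H_{k,n^\ast})$ to conclude $\tilde B$ is $\F_{p_0}$-linear, say $\tilde B=\B(\pi)$ for a subspace $\pi$ of the relevant space over $\F_{p_0}$; (4) transfer along the Desarguesian-spread picture of Section~1: projection in $\PG(n^\ast,q)$ corresponds to quotienting the spread by a subspace, and the image of $\B(\pi)$ is $\B(\bar\pi)$ for the projected subspace $\bar\pi$, so $B$ is $\F_{p_0}$-linear; (5) note the exponent hypothesis and $p^e\ge 7$ are not needed here because they are only used in the harder direction ($n>n^\ast$), so Lemma~\ref{simpel} holds unconditionally in $q$.

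The main obstacle is step (2)–(3): producing $\tilde B$ that is simultaneously a $k$-blocking set of $\PG(n^\ast,q)$, \emph{small} (this is the delicate size bound — a full cone over $B$ has size roughly $q^{n^\ast-n}|B|$, far too big, so the cone must have a low-dimensional vertex and one must verify the count stays under $3(q^k+1)/2$, which essentially forces the vertex to be $(n^\ast-n-1)$-dimensional and the base to have size $\approx q^k$, i.e.\ one genuinely needs $\tilde B$ to have the same order of magnitude as $B$), \emph{and} minimal (no proper subset blocks — one must check that removing any cone point destroys the blocking property, using minimality of $B$ downstairs). Equivalently, if one prefers the projection route, the obstacle is showing every small minimal $k$-blocking set in $\PG(n,q)$ arises as a projection of one in $\PG(n+1,q)$ and that linearity is inherited under the corresponding spread quotient; this inheritance is exactly the content one must spell out via the correspondence recalled in Remark~\ref{trans} and the references \cite{linearsets}. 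Once the right $\tilde B$ is in hand, steps (3)–(5) are formal.
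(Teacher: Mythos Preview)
You are vastly overcomplicating a one-line argument. The paper's proof is simply this: embed $\PG(n,q)$ as an $n$-dimensional subspace of $\PG(n^\ast,q)$; then $B$ \emph{itself}, with no modification whatsoever, is a small minimal $k$-blocking set of $\PG(n^\ast,q)$. Indeed, any $(n^\ast-k)$-space of $\PG(n^\ast,q)$ meets the subspace $\PG(n,q)$ in a space of dimension at least $(n^\ast-k)+n-n^\ast=n-k$, and hence meets $B$; the size of $B$ is unchanged, so $B$ is still small; and minimality transfers because a tangent $(n-k)$-space to $B$ inside $\PG(n,q)$ can be extended to an $(n^\ast-k)$-space meeting $\PG(n,q)$ exactly in that $(n-k)$-space, giving a tangent $(n^\ast-k)$-space in the ambient space. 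Now apply $(H_{k,n^\ast})$ to conclude $B$ is linear.

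Your proposal misses exactly this: you assume some inflation (cone) or lifting (anti-projection) of $B$ is required to make it a blocking set upstairs, and then you struggle --- correctly noting that a full cone is far too large to be small, and that realising $B$ as a projection from $\PG(n+1,q)$ is not obviously possible --- with obstacles that do not exist. The point is that for $k$-blocking sets (as opposed to, say, sets with fixed codimension behaviour), the blocking dimension $n-k$ goes up with $n$, so the same point set blocks in every ambient space. No construction, no projection, no spread argument is needed; the lemma really is as trivial as its label suggests.
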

\begin{proof} A small minimal $k$-blocking set $B$ in $\PG(n,q)$, with $k+1\leq n\leq n^\ast$, can be embedded in $\PG(n^\ast,q)$, in which it clearly is a small minimal $k$-blocking set. Since ($H_{k,n^\ast}$) holds, $B$ is linear, hence, ($H_{k,n}$) holds.
\end{proof}

The main idea for the proof of Main Theorem 1 is to prove that all the $(p_0+1)$-secants through a particular point $P$ of a $k$-blocking set $B$ span a $hk$-dimensional space $\mu$ over $\F_{p_0}$, and to prove that the linear blocking set defined by $\mu$ is exactly the $k$-blocking set $B$. 
\begin{lemma} \label{lemma9} Assume ($H_{k,n-1}$) and $n-1\geq 2k$, and let $B$ denote a small minimal $k$-blocking set with exponent $e$ in $\PG(n,p^t)$, $p$ prime, $p^e\geq 7$, $t\geq 2$.  Let $\Pi$ be a plane in $\PG(n,p^t)$.
\begin{itemize}
\item[(i)] There is a $3$-space $\Sigma$ through $\Pi$ meeting $B$ only in points of $\Pi$ and containing a point $Q$ not lying on a secant line to $B$ {\bf if} $\mathbf{k>2}$. 
\item[(ii)]The intersection $\Pi\cap B$, is a linear set {\bf if} $\mathbf{k>2}$.
\end{itemize}
\end{lemma}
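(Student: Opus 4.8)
The plan is to prove (i) by a counting argument and then derive (ii) by projecting from the point produced in (i). Write $p_0=p^e$, $q=p_0^h$ with $h=t/e$.

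\emph{For (i):} First I would collect, via Lemma \ref{lemma6} (applicable since $n-1\ge 2k$ gives $n\ge 2k+1$), the set $G$ of points lying neither in $B$ nor on a secant to $B$, together with the explicit lower bound
$$|G|\ \ge\ L:=\frac{p_0^{h(n+1)}-1}{p_0^h+1}-(p_0^{2hk-2}+2p_0^{2hk-3})(p_0^h+1)-p_0^{hk}-p_0^{hk-1}-p_0^{hk-2}-3p_0^{hk-3}.$$
Next, call a $3$-space through $\Pi$ \emph{good} if it meets $B$ only inside $\Pi$. Since each point of $B\setminus\Pi$ determines a unique $3$-space through $\Pi$, at most $|B|$ of these $3$-spaces are non-good, and each non-good one has only $q^3$ points off $\Pi$; by Lemma \ref{groottes}, $|B|<p_0^{hk}+p_0^{hk-1}+p_0^{hk-2}+3p_0^{hk-3}$. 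As $\Pi$ itself carries at most $q^2+q+1$ points of $G$, the good $3$-spaces through $\Pi$ jointly contain at least $L-(q^2+q+1)-|B|q^3$ points of $G\setminus\Pi$, and I would check that this number is strictly positive under the hypotheses $p_0\ge 7$, $k\ge 3$, $n\ge 2k+1$. That yields a good $3$-space $\Sigma\supseteq\Pi$ containing a point $Q\in G\setminus\Pi$; then $Q\notin B$, $Q\notin\Pi$, $Q$ lies on no secant to $B$, and $\Sigma\cap B=\Pi\cap B$, which is exactly (i). (The bound $k>2$ is used here: for $k=2$ one only gets $n\ge5$, and the estimate breaks down.)

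\emph{For (ii):} Take $\Sigma,Q$ as in (i), fix a hyperplane $H$ with $Q\notin H$, and let $\rho$ be the projection from $Q$ onto $H$. By Theorem \ref{szonyi}(iii) the image $\rho(B)$ is a small minimal $k$-blocking set in $H\cong\PG(n-1,q)$, and since $Q$ sits on no secant to $B$, the restriction $\rho|_B$ is injective. Because $Q\in\Sigma$ and $Q\notin H$, the set $\bar\Pi:=\Sigma\cap H$ is a plane, it equals $\rho(\Pi)$, and $\rho$ restricts to a projective isomorphism $\Pi\to\bar\Pi$. The key local fact to verify is $\rho(\Pi\cap B)=\bar\Pi\cap\rho(B)$: if $P\in B$ has $\rho(P)\in\bar\Pi$, then the line $\langle Q,P\rangle$ meets $\bar\Pi\subseteq\Sigma$ and passes through $Q\in\Sigma$, hence lies in $\Sigma$, so $P\in B\cap\Sigma=\Pi\cap B$; the other inclusion is immediate. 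Finally I would invoke the hypothesis ($H_{k,n-1}$) to get that $\rho(B)$ is $\F_{p_1}$-linear for some subfield $\F_{p_1}$ of $\F_q$, so $\bar\Pi\cap\rho(B)$, being the intersection of a linear set with a subspace, is $\F_{p_1}$-linear; pulling this back along the projective isomorphism $\rho|_\Pi$ (which preserves linearity) shows $\Pi\cap B=(\rho|_\Pi)^{-1}(\bar\Pi\cap\rho(B))$ is a linear set.

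\emph{Main obstacle.} The one genuinely delicate point is the numerical inequality $L>(q^2+q+1)+|B|q^3$ in (i): it is essentially tight at $(k,n)=(3,7)$, so it is important to keep the full bound of Lemma \ref{lemma6} rather than the weakened ``exceeds $|\PG(n-1,p_0^h)|$'' version, and to use $p_0\ge 7$. Morally, $L$ has magnitude $q^n$ while $|B|q^3$ has magnitude $q^{k+3}$, and $n\ge 2k+1\ge k+4$ holds exactly when $k\ge 3$, which is where the hypothesis enters. Everything in (ii) apart from the identity $\rho(\Pi\cap B)=\bar\Pi\cap\rho(B)$ — which relies on both $\Pi\subseteq\Sigma$ and $\Sigma\cap B=\Pi\cap B$ — is routine transfer of linearity through an injective projection.
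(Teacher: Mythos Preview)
Your proof is correct and follows essentially the same approach as the paper: both parts rely on the same counting inequality $L>(q^2+q+1)+|B|q^3$ in (i), just phrased dually (the paper bounds the number of $3$-spaces containing a point of $G\setminus\Pi$ from below by $r=(L-(q^2+q+1))/q^3$ and derives $|B|\ge r$ if none of them were good), and both derive (ii) by projecting from the point $Q$ produced in (i). The one simplification the paper makes in (ii) is to choose the hyperplane $H$ to contain $\Pi$ itself (possible since $Q\notin\Pi$), so that $\bar\Pi=\Pi$ and $\Pi\cap\bar B=\Pi\cap B$ directly, avoiding your pull-back along $\rho|_\Pi$.
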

\begin{proof} Let $\Pi$ be a plane of $\PG(n,p^t)$, $p_0:=p^e\geq 7$. By Lemma \ref{lemma6}, there are at least $$s:=(p_0^{h(n+1)}-1)/(p_0^h+1)-(p_0^{2hk-2}+2p_0^{2hk-3})(p_0^h+1)-p_0^{hk}-p_0^{hk-1}-p_0^{hk-2}-3p_0^{hk-3},$$ points $Q\notin \{B\}$ not lying on a secant line to $B$. This means that there are at least $r:=(s-(p_0^{2h}+p_0^h+1))/p_0^{3h}$ $3$-spaces through $\Pi$ that contain a point that does not lie on a secant line to $B$ and is not contained in $B$ nor in $\Pi$.
If all $r$ $3$-spaces contain a point $Q$ of $B$ that is not contained in $\Pi$, then the number of points in $B$ is at least $r$. It is easy to check that this is a contradiction if $n-1\geq 2k$, $p^e\geq 7$, and $k>2$.

Hence, there is a $3$-space $\Sigma$ through $\Pi$ meeting $B$ only in points of $\Pi$ and containing a point $Q$ not lying only on a secant line to $B$. 
The projection of $B$ from $Q$ onto a hyperplane containing $\Pi$ is a small minimal $k$-blocking set $\bar{B}$ in $\PG(n-1,q)$ (see Theorem \ref{szonyi}(iii)), which is, by ($H_{k,n-1}$), a linear set. Now $\Pi\cap \bar{B}=\Pi\cap B$, since the space $\langle Q,\pi\rangle$ meets $B$ only in points of $\Pi$, and hence, the set $\Pi\cap B$ is linear.
\end{proof}

\begin{corollary} \label{rechte} Assume ($H_{k,n-1}$), $k>2$, $(n-1)\geq 2k$ and let $B$ denote a small minimal $k$-blocking set with exponent $e$ in $\PG(n,p^t)$, $p$ prime, $p^e\geq 7$, $t\geq 2$.  The intersection of a line with $B$ is an $\F_{p^e}$-linear set.
\end{corollary}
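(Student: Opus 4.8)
The plan is to deduce Corollary~\ref{rechte} directly from Lemma~\ref{lemma9}(ii) by observing that a line is contained in a plane. Concretely, let $L$ be a line of $\PG(n,p^t)$ meeting $B$, and pick any plane $\Pi$ with $L\subset\Pi$. Under the hypotheses ($H_{k,n-1}$), $k>2$, $n-1\geq 2k$, $p^e\geq 7$, $t\geq 2$, Lemma~\ref{lemma9}(ii) tells us that $\Pi\cap B$ is a linear set, say an $\F_{p_0'}$-linear set for some subfield $\F_{p_0'}$ of $\F_q$. The intersection of the linear set $\Pi\cap B$ with the line $L$ is then itself a linear set (the intersection of a linear set with a subspace is linear, being the image under $\B$ of the intersection of the defining $\F_{p_0'}$-subspace with the $\F_{p_0'}$-subspace underlying $L$). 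This already gives that $L\cap B$ is an $\F_{p_0'}$-linear set.

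The remaining point is to upgrade ``linear'' to ``$\F_{p^e}$-linear'', i.e.\ to identify the subfield as exactly $\F_{p^e}$ where $e$ is the exponent of $B$. First I would dispose of the trivial cases: if $|L\cap B|\leq 1$ there is nothing to prove, and if $L\subseteq B$ then $L\cap B=L$ is $\F_q$-linear, hence $\F_{p^e}$-linear. So assume $2\leq|L\cap B|\leq q$. By Lemma~\ref{e}, $L$ lies in an $(n-k)$-space $\pi$ with $B\cap\pi=B\cap L$, and since every $(n-k)$-space meets $B$ in $1\bmod p^e$ points, $|L\cap B|\equiv 1\pmod{p^e}$; in particular $|L\cap B|\geq p^e+1$. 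On the other hand, $L\cap B$ is an $\F_{p_0'}$-linear set on a line, so it has at most $p_0'+1$ points and $|L\cap B|\equiv 1\pmod{p_0'}$ (as a nonempty linear set on a line has $1\bmod p_0'$ points). Write $p_0'=p^f$. If $|L\cap B|=p^e+1$, then Theorem~\ref{pplus1k} (applicable since $p^e\geq 7$) gives directly that $L\cap B$ is $\F_{p^e}$-linear and $\F_{p^e}\leq\F_q$, and we are done. It remains to rule out, or reduce, the case $|L\cap B|>p^e+1$.

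The main obstacle is therefore the case of a ``long'' secant, where $p^e+1<|L\cap B|\leq p^f+1$. Here I would argue as follows: from the congruences above, $p^e\mid |L\cap B|-1$ and $p^f\mid |L\cap B|-1$; combined with $|L\cap B|\leq p^f+1$ this forces $|L\cap B|-1$ to be a multiple of $p^{\max(e,f)}$ that is at most $p^f$, whence $f\geq e$ and $|L\cap B|\in\{1+p^f\}$ unless $f=e$. If $f>e$ we would have a line meeting $B$ in exactly $p^f+1$ points with $p^f\geq 7$, and Theorem~\ref{pplus1k} would then say $L\cap B$ is $\F_{p^f}$-linear with $\F_{p^f}\leq\F_q$. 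But the definition of the exponent $e$ as the \emph{largest} integer with the $1\bmod p^e$ property, together with Corollary~\ref{rechte1modp}, then yields the sharper divisibility needed to force $f\le e$; indeed if every line met $B$ in $1\bmod p^f$ or $0$ points we could replace $e$ by $f$, and one checks via Lemma~\ref{e} that a single $(p^f+1)$-secant already propagates this congruence to all $(n-k)$-spaces, contradicting maximality of $e$. Hence $f=e$, $|L\cap B|=p^e+1$, and the previous paragraph applies. Assembling these pieces gives that every line meets $B$ in an $\F_{p^e}$-linear set, which is the assertion of Corollary~\ref{rechte}.
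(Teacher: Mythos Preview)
Your first paragraph is exactly the paper's (implicit) argument: the corollary is stated without proof, as an immediate consequence of Lemma~\ref{lemma9}(ii) --- take any plane $\Pi\supset L$, apply Lemma~\ref{lemma9}(ii), and restrict to $L$. The identification of the subfield as $\F_{p^e}$ is not argued separately for the corollary; it is implicit in the proof of Lemma~\ref{lemma9}(ii) together with the observation (spelled out in the proof of Lemma~\ref{span}) that the projected blocking set $\bar B$ is not merely linear but $\F_{p_0}$-linear, since every line meets it in $1\bmod p_0$ or $0$ points.

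Your attempt to pin down the subfield directly contains a genuine error. You assert that an $\F_{p_0'}$-linear set on a line has at most $p_0'+1$ points. This is false: an $\F_{p_0'}$-linear set of rank $r$ on $\PG(1,q)$ can have up to $(p_0'^{\,r}-1)/(p_0'-1)$ points, and there is no reason the rank should be~$2$. Consequently the chain of inequalities forcing $|L\cap B|=p^e+1$ collapses, and with it the appeal to Theorem~\ref{pplus1k}. The final fallback --- that a single $(p^f+1)$-secant ``propagates'' the $1\bmod p^f$ congruence to all $(n-k)$-spaces via Lemma~\ref{e} --- is also unjustified: Lemma~\ref{e} only produces \emph{one} $(n-k)$-space through $L$ with the same intersection, it says nothing about all $(n-k)$-spaces.

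The clean fix is to argue at the level of $\bar B$ rather than $L$. The projection $\bar B$ is a small minimal $k$-blocking set in $\PG(n-1,q)$ with exponent at least $e$ (every $(n-1-k)$-space of the hyperplane spans, together with $Q$, an $(n-k)$-space of $\PG(n,q)$, and $Q$ lies on no secant). By $(H_{k,n-1})$ it is linear; combined with the $1\bmod p^e$ intersection property on lines this forces $\bar B$ to be $\F_{p^e}$-linear (as the paper notes in the proof of Lemma~\ref{span}). Then $\Pi\cap B=\Pi\cap\bar B$ is $\F_{p^e}$-linear, and so is $L\cap B$.
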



\begin{remark}\label{rem2} The linear set $\B(\mu)$  does not determine the subspace $\mu$ in a unique way; by Remark \ref{trans}, we can choose $\mu$ through a fixed point $S(P)$, with $P\in \B(\mu)$. Note that there may exist different spaces $\mu$ and $\mu'$, {\em through the same point of $\PG(h(n+1)-1,p)$}, such that $\B(\mu)=\B(\mu')$. 
If $\mu$ is a line, however, if we fix a point $x$ of an element of $\B(\mu)$, then there is a unique line $\mu'$ through $x$ such that $\B(\mu)=\B(\mu')$ since, in this case, $\mu'$ is the unique transversal line through $x$ to the regulus $\B(\mu)$. This observation is crucial for the proof of the following lemma.
\end{remark}

\begin{lemma}\label{span} Assume ($H_{k,n-1}$), $n-1\geq 2k$, and let $B$ be a small minimal $k$-blocking set with exponent $e$ in $\PG(n,p^t)$, $p$ prime, $p_0:=p^e\geq 7$. Denote the $(p_0+1)$-secants through a point $P$ of $B$ that lies on at least one $(p_0+1)$-secant, by $L_1,\ldots,L_{s}$. Let $x$ be a point of $\S(P)$ and let $\ell_i$ be the line through $x$ such that $\B(\ell_i)=L_i\cap B$. The following statements hold:
\begin{itemize}
\item[(i)] The space $\langle \ell_1,\ldots,\ell_s\rangle$ has dimension $hk$.
\item[(ii)] $\B(\langle \ell_i,\ell_j\rangle)\subseteq B$ for $1\leq i\neq j\leq s$.
\end{itemize}
\end{lemma}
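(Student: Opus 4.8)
The plan is to work in the Desarguesian spread picture in $\PG(h(n+1)-1,p_0)$ and show that the linear structure of the blocking set, which we already know locally from Corollary~\ref{rechte} (each line meets $B$ in an $\F_{p_0}$-linear set, hence each $(p_0+1)$-secant $L_i$ corresponds to a regulus $\B(\ell_i)$ with $\ell_i$ a line through $x$, uniquely determined by $x$ as in Remark~\ref{rem2}), can be globalised. For part~(ii), fix $i\ne j$ and consider the plane $\Pi_{ij}:=\langle L_i,L_j\rangle$ of $\PG(n,p^t)$. By Corollary~\ref{rechte} (or Lemma~\ref{lemma9}(ii) in the range $k>2$, and one handles the remaining small values of $k$ by the projection argument of Lemma~\ref{lemma9} since $n-1\ge 2k$) the set $\Pi_{ij}\cap B$ is an $\F_{p_0}$-linear set in the plane $\Pi_{ij}$. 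In the spread model this means there is an $\F_{p_0}$-subspace $\sigma$ of the $3h$-dimensional ambient space $\S(\Pi_{ij})$ with $\B(\sigma)=\Pi_{ij}\cap B$; its rank is at least $4$ (it contains the two independent sublines $L_i\cap B$ and $L_j\cap B$ sharing the point $P$), but by Lemma~\ref{groottes}/the size bounds $|\Pi_{ij}\cap B|< p_0^{2h}+p_0^{2h-1}+\cdots$ forces the rank to be small enough — in fact exactly $3$ after normalising — so that the two transversal lines $\ell_i,\ell_j$ through $x$ already lie inside the plane $\langle\sigma\rangle$, and then $\B(\langle\ell_i,\ell_j\rangle)\subseteq\B(\sigma)\subseteq B$. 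The crux is the uniqueness in Remark~\ref{rem2}: since $\ell_i$ is the unique transversal through $x$ to the regulus $L_i\cap B$, and $\sigma$ (chosen through $x$) contains such a transversal line for $L_i\cap B$, that transversal must coincide with $\ell_i$, and likewise for $\ell_j$; hence $\langle\ell_i,\ell_j\rangle\subseteq\langle\sigma\rangle$.

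For part~(i), first note each $\ell_m$ is a line through $x$ inside $\S(P)$-avoiding directions, so $\langle\ell_1,\dots,\ell_s\rangle$ has some dimension $d$; I want to show $d=hk$. The lower bound $d\ge hk$ I would get from counting: by Theorem~\ref{aantalsecanten} (for $k>1$) or Lemma~\ref{lemma1} (for $k=1$) the point $P$ lies on many $(p_0+1)$-secants, more than could fit through $x$ inside any space of dimension less than $hk$; more precisely, a subspace of dimension $d$ through $x$ contains at most $(p_0^{d}-1)/(p_0-1)$ lines through $x$, so $s\le (p_0^{d}-1)/(p_0-1)$, and the secant count forces $d\ge hk$. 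The upper bound $d\le hk$ is where part~(ii) is used: by (ii), $\B(\langle\ell_i,\ell_j\rangle)\subseteq B$ for all $i,j$, and iterating, $\langle\ell_1,\dots,\ell_s\rangle$ is a subspace $\mu$ with $\B(\mu)\subseteq B$ — here one must check that taking spans of the $\ell_m$ pairwise, then triple-wise, etc., keeps the associated linear set inside $B$, which follows because $\B$ of a span of subspaces whose pairwise spans give reguli inside $B$ stays inside $B$ by an inductive application of the planar linearity already established. Then $\B(\mu)$ is a small $k$-blocking-set-like linear set contained in $B$; since $\dim\mu=d$ gives $|\B(\mu)|$ on the order of $p_0^{(d+1)\cdot\text{(something)}}$ and $|B|<p_0^{hk}+p_0^{hk-1}+\cdots$ by Lemma~\ref{groottes}, we get $d\le hk$. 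Combining, $d=hk$.

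The main obstacle I expect is the bookkeeping in part~(ii) that pins down the rank of $\sigma$ and guarantees $\ell_i,\ell_j\subseteq\langle\sigma\rangle$: a priori an $\F_{p_0}$-linear set in a plane can have rank up to $3h$, and one must use the size bound from Lemma~\ref{groottes} together with the fact that $\sigma$ contains the two reguli $L_i\cap B$, $L_j\cap B$ to squeeze the rank down to $3$ and to force $\sigma\supseteq\ell_i\cup\ell_j$ (using that $\ell_i$ is the \emph{unique} transversal through $x$). A secondary subtlety is handling the cases $k\le 2$ in part~(ii), where Lemma~\ref{lemma9}(ii) is not available directly; there I would instead project from a suitable point off all secants (available because $n-1\ge 2k$ implies $n\ge 2k+1$, so Lemma~\ref{lemma6} applies) down to $\PG(n-1,p^t)$, invoke ($H_{k,n-1}$) to get linearity of the image, and pull back, exactly as in Lemma~\ref{lemma9}. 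Everything else — the counting inequalities, the transitivity/uniqueness facts from Remarks~\ref{trans} and~\ref{rem2}, the size estimates — is routine once the rank-control step is in place.
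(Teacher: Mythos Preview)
Your plan for part~(ii) is close to the paper's for $k>2$: once $\langle L_i,L_j\rangle\cap B$ is $\F_{p_0}$-linear, choose the defining subspace $\sigma$ through $x$; since $\sigma$ must contain a transversal through $x$ to each regulus $L_i\cap B$, Remark~\ref{rem2} forces $\ell_i,\ell_j\subseteq\sigma$, hence $\B(\langle\ell_i,\ell_j\rangle)\subseteq\B(\sigma)\subseteq B$. (Your rank remarks are both off and unnecessary: two sublines sharing $P$ give rank $\ge 3$, not $\ge 4$, and nothing forces or requires the rank of $\sigma$ to be exactly $3$; the inclusion $\langle\ell_i,\ell_j\rangle\subseteq\sigma$ is all you use.) For $k\le 2$, however, ``pull back exactly as in Lemma~\ref{lemma9}'' is not available: the counting in Lemma~\ref{lemma9} that produces a clean $3$-space through the plane genuinely requires $k>2$. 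The paper replaces this by a weaker requirement that \emph{does} survive for $k\le 2$: rather than a $Q$ with all of $\langle Q,\Pi_{ij}\rangle$ clean, it finds a $Q$ such that only the three planes $\langle Q,L_i\rangle$, $\langle Q,L_j\rangle$, and $\langle Q,M\rangle$ (for one fixed line $m\subset\langle\ell_i,\ell_j\rangle$, $M:=\langle\B(m)\rangle$) meet $B$ only inside the hyperplane. That suffices: linearity of $\bar B$ gives $\B(\langle\ell_i,\ell_j\rangle)\subseteq\bar B$, and the clean plane $\langle Q,M\rangle$ yields $M\cap B=M\cap\bar B$, so $\B(m)\subset B$; varying $m$ finishes.

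The real gap is your upper bound in part~(i). You propose to obtain $\dim W\le hk$ by first proving $\B(W)\subseteq B$ from (ii) and then invoking $|B|<p_0^{hk}+\cdots$. But ``iterating'' (ii) from pairs to the full span is not a routine induction: from $\B(u),\B(v)\in B$ with $u\in\langle\ell_{i_1},\dots,\ell_{i_m}\rangle$, $v\in\ell_{i_{m+1}}$, nothing forces the $\F_{p_0}$-subline $\B(\langle u,v\rangle)$ into $B$, and ``planar linearity'' does not supply the missing step. In fact the passage from (ii) to $\B(W)\subseteq B$ is exactly what the proof of Main Theorem~1 accomplishes \emph{after} Lemma~\ref{span}, and the pigeonhole used there (a line through $w$ must meet two of the $\ell_i$) relies on the count $(p_0^{hk}-1)/(p_0-1)$ of lines through $w$ in $W$, i.e.\ on $\dim W=hk$ already being known. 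So your argument is circular. The paper avoids this by bounding $\dim W$ directly via projection: pick $Q$ off all secants, project onto a hyperplane $H$, use $(H_{k,n-1})$ to write $\bar B=\B(\mu)$ with $\dim\mu=hk$; each $\ell_i$ projects (from $\S(Q)$) into $\mu$, so $W\subseteq\langle\S(Q),\mu\rangle$ and $\dim W\le hk+h$. If $\dim W\ge hk+1$ then $W$ meets $\S(Q)$, and since this must hold for \emph{every} admissible $Q$, the lower bound of Lemma~\ref{lemma6} on the number of such $Q$ gives a contradiction. This projection-and-vary-$Q$ argument is the missing ingredient in your plan.
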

\begin{proof} 
(i) Let $P$ be a point of $B$ lying on a $(p_0+1)$-secant, and let $H$ be a hyperplane through $P$. By Lemma 6, there is a point $Q$, not in $B$ and not in $H$, not lying on a secant line to $B$.
The projection of $B$ from $Q$ onto $H$ is a small minimal $k$-blocking set $\bar{B}$ in $H\cong \PG(n-1,q)$ (Theorem \ref{szonyi} (iii)). By $(H_{k,n-1})$, $\bar{B}$ is a linear set. Every line meets $B$ in $1$ mod $p_0$ or $0$ points, which implies that every line in $H$ meets $\bar{B}$ in $1$ mod $p_0$ or $0$ points, hence, $\bar{B}$ is $\F_{p_0}$-linear.
Take a fixed point $x$ in $\S(P)$. Since $\bar{B}$ is an $\F_{p_0}$-linear set, there is an $hk$-dimensional space $\mu$ in $\PG(h(n+1)-1,p_0)$, through $x$, such that $\B(\mu)=\bar{B}$.

From Lemma \ref{lemma1}, we get that the number of $(p_0+1)$-secants through $P$ to $B$ is at least $z:=((p_0^{hk}-1)/(p_0^h-1)-3p_0^{hk-h-3})(p_0^{h-1}-4p_0^{h-2})+1$, denote them by $L_1,\ldots,L_s$ and let $\ell_1,\ldots,\ell_s$ be the lines through $x$ such that $\B(\ell_i)=B\cap L_i$. These lines exist by Theorem \ref{pplus1k}. Note that, by Remark \ref{rem2}, $\B(\ell_i)$ determines the line $\ell_i$ through $x$ in a unique way, and that $\ell_i\neq \ell_j$ for all $i\neq j$.

We will prove that the projection of $\ell_i$ from $\S(Q)$ onto $\langle \S(H)\rangle$ in $\PG(h(n+1)-1,p_0)$ is contained in $\mu$. Since $L_1$ is projected onto a $(p_0+1)$-secant $M$ to $\bar{B}$ through $P$, there is a line $m$ through $x$ in $\PG(h(n+1)-1,p_0)$ such that $\B(m)=M\cap \bar{B}$. Now $\bar{B}=\B(\mu)$, and $\vert\bar{B}\cap M\vert=p_0+1$, hence, there is a line $m'$ through $x$ in $\mu$ such that $\B(m')=\bar{B}\cap M$. Since $m$ is the unique transversal line through $x$ to $M\cap \bar{B}$ (see Remark \ref{rem2}), $m=m'$, and $m$ is contained in $\mu$.

This implies that the space $W:=\langle \ell_1,\ldots,\ell_s\rangle$ is contained in $\langle \S(Q),\mu\rangle$, hence, $W$ has dimension at most $hk+h$. 
Suppose that $W$ has dimension at least $hk+1$, then it intersects the $(h-1)$-dimensional space $\S(Q)$ in at least a point. But this holds for all $\S(Q)$ corresponding to points, not in $B$, such that $Q$ does not lie on a secant line to $B$. This number is at least $$(p_0^{h(n+1)}-1)/(p_0^h+1)-(p_0^{2hk-2}+2p_0^{2hk-3})(p_0^h+1)-p_0^{hk}-p_0^{hk-1}-p_0^{hk-2}-3p_0^{hk-3}$$ by Lemma \ref{lemma6}, which is larger than the number of points in $W$, since $W$ is at most $(hk+h)$-dimensional, a contradiction.

From Theorem \ref{aantalsecanten}, we get that $W$ contains at least $$(((p_0^{hk}-1)/(p_0^h-1)-3p_0^{hk-h-3})(p_0^{h-1}-4p_0^{h-2})+1)p_0+1$$ points, which is larger than $(p_0^{hk}-1)/(p_0-1)$ if $p_0\geq 7$, hence, $W$ is at least $hk$-dimensional. Since we have already shown that $W$ is at most $hk$-dimensional, the statement follows.\\

(ii) W.l.o.g. we choose $i=1,j=2$. Let $m$ be a line in $\langle \ell_1,\ell_2\rangle$, not through $\ell_1\cap \ell_2$. 
Let $M$ be the line of $\PG(n,q^t)$ containing $\B(m)$ and let $H$ be a hyperplane of $\PG(n,q^t)$ containing the plane $\langle L_1,L_2\rangle$. 
We claim that there exists a point $Q$, not in $H$, such that the planes $\langle Q,L_1\rangle, \langle Q,L_2\rangle$ and $\langle Q,M\rangle$ only contain points of $B$ that are in $H$.

If $k>2$, this follows from Lemma \ref{lemma9}(i). Now assume that $1\leq k\leq 2$. There are $q^{n-2}$ planes through $M$, not in in $H$. Since $M$ is at least a $(p_0+1)$-secant (Theorem \ref{szonyi} (i)), it holds that if a plane $\Pi$ through $M$ contains a point of $B$, that is not contained in $M$, then, $\Pi$ contains at least $p_0^2$ points of $B$, not in $M$ (again by Theorem \ref{szonyi}(i)). Since $\vert B\vert \leq q^k+q^{k-1}+q^{k-2}+3q^{k-3}$ (Lemma \ref{groottes}), and $n-1\geq 2k$, there is at least one plane $\Pi$ through $M$, not contained in $H$ that contains only points of $B$ that are contained in $M$. Now, there is one of the $q^2$ points in $\Pi$, say $Q$, that is not contained in $M$ for which  the planes $\langle Q,L_i\rangle$, $i=1,2$ only contain points of $B$ on the line $L_i$, $i=1,2$, since otherwise, the number of points in $B$ would be at least $p_0^2q^2$, a contradiction since $k\leq 2$ and $\vert B\vert \leq q^k+q^{k-1}+q^{k-2}+3q^{k-3}$ by Lemma \ref{groottes}. This proves our claim.

The projection of $B$ from $Q$ onto $H$ is a small minimal $k$-blocking set $\bar{B}$ in $\PG(n,q)$ (Theorem \ref{szonyi} (iii)). By $(H_{k,n-1})$, $\bar{B}$ is a linear set, hence, it meets $\langle L_1,L_2\rangle$ in a linear set. This means that there is a space $\pi$ through $x$ such that $\langle L_1,L_2\rangle\cap B=\mathcal{B}(\pi)$. Note that, since $\langle Q,L_1\rangle$ and $\langle Q,L_2\rangle$ only contain points of $B$ that are contained in $H$, the lines $L_1$ and $L_2$ are $(p_0+1)$-secants to $\bar{B}$. 

Hence, the space $\pi$ contains $\ell_i$  since $\B(\pi)\cap L_i=\B(\ell_i)$ and $\ell_i$ is the unique transversal line to the regulus $B\cap L_i$, $i=1,2$. Hence, $\B(\langle \ell_1,\ell_2\rangle)\subset \bar{B}$, so $\B(m)\subset \bar{B}$. The plane $\langle Q,M\rangle$ only contains points of $B$ that are on $M$, so $M\cap B=M\cap \bar{B}$, hence, $\B(m)\subset B$. Since every point of $\langle \ell_1,\ell_2\rangle$, not on $\ell_1,\ell_2$, lies on a line $m$ meeting $\ell_1$ and $\ell_2$ in different points, $\B(\langle \ell_1,\ell_2\rangle)\subseteq B$.
\end{proof}

{\bf Proof of Main Theorem 1.} 

 Let $B$ be a small minimal $k$-blocking set with exponent $e$ in $\PG(n,p^t)$, $p$ prime, $p_0=p^e\geq 7$ and assume that $(H_{k,n-1})$ holds with $n-1\geq 2k$. 
Let $P$ be a point of $B$, lying on a $(p_0+1)$-secant. By Theorem \ref{aantalsecanten}, there are at least $((p_0^{hk}-1)/(p_0^h-1)-3p_0^{hk-h-3})(p_0^{h-1}-4p_0^{h-2})+1$  $(p_0+1)$-secants $L_1\ldots,L_s$ through $P$, and by Lemma \ref{span}, the corresponding lines $\ell_1,\ldots,\ell_s$ in $\PG(h(n+1)-1,p_0)$, with $\B(\ell_i)=B\cap L_i$, $\ell_i$ through a fixed point $x$ of $\S(P)$, span an $hk$-dimensional space $W$. 
Suppose that $\B(W)\not\subseteq B$, and let $w$ be a point of $W$ for which $\B(w)\notin B$. Since the number of points lying on one of the lines of the set $\{\ell_1,\ldots,\ell_s\}$, is at least $(((p_0^{hk}-1)/(p_0^h-1)-3p_0^{hk-h-3})(p_0^{h-1}-4p_0^{h-2})+1)p_0+1$, at least one of the $(p_0^{hk}-1)/(p_0-1)$ lines through $w$, say $m$, contains two points lying on one of the lines of the set $\{\ell_1,\ldots,\ell_s\}$. By Lemma \ref{span} (b), $\B(m)$ is contained in $B$, a contradiction since $\B(w)\in \B(m)$, and $\B(w)\notin B$. 

Hence, $\B(W)\subseteq B$, and since $\B(W)$ is a small minimal linear $k$-blocking set $\PG(n,p^t)$, contained in the minimal $k$-blocking set $B$, $B$ equals the linear set $\B(W)$. Hence, we have shown that if $(H_{k,n-1})$ holds, with $n-1\geq 2k$, then $(H_{k,n})$ holds, and repeating this argument shows that if $(H_{k,n^*})$ holds for some $n^*$, then  $(H_{k,n})$ holds for all $n\geq n^*$. Since Lemma \ref{simpel} shows the desired property for all $n$ with $k+1\leq n \leq n^*$, the statement follows. \qed

\section{The proof of Main Theorem 2}
In this section, we will prove Main Theorem 2, stating that, if all small minimal $1$-blocking sets in $\PG(n,p_0^h)$ are linear, then all small minimal $k$-blocking sets in $\PG(n,p_0^h)$, are linear, provided a condition on $p_0$ and $h$ holds.

We proved in Lemma \ref{groottes} that a subspace meets the small minimal $k$-blocking set $B$ in either in a `small' number, or in a `large' number of points. To simplify the terminology, we call a $(n-k+s)$-space $\Pi$, $s\leq k$,  for which $\vert B\cap \Pi \vert< p_0^{hs}+p_0^{hs-1}+p_0^{hs-2}+3p_0^{hs-3}$ points, a {\em small} $(n-k+s)$-space. An $(n-k+s)$-space which is not small is called {\em large}.

%




%

%

\begin{lemma}\label{hypervlakken} Let $\Pi$ be an $(n-k)$-space of $\PG(n,p_0^h)$ and let $B$ be a small minimal $k$-blocking set with exponent $e$ in $\PG(n,p^t)$, $p$ prime, $p_0:=p^e\geq 7$, $k>1$.
\begin{enumerate}
\item[(i)] If $B\cap \Pi$ is a point, then there are at most $p_0^{hk-h-2}+4p_0^{hk-h-3}-1$ large $(n-k+1)$-spaces through $\Pi$.
\item[(ii)] 
If $\Pi$ intersects $B$ in $p_0+1$ points, then there are at most $3p_0^{hk-h-3}$ large $(n-k+1)$-spaces through $\Pi$.
\end{enumerate}
\end{lemma}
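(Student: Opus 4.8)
## Proof proposal for Lemma \ref{hypervlakken}

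The plan is to run a double-counting argument on the points of $B$ lying in the $(n-k+1)$-spaces through $\Pi$, entirely parallel to the proof of Lemma \ref{large}, but now starting from a smaller ``forced'' intersection size in $\Pi$. The pencil of $(n-k+1)$-spaces through the $(n-k)$-space $\Pi$ has exactly $(p_0^{hk}-1)/(p_0^h-1)$ members, and these partition $B\setminus\Pi$. So if $y$ of them are large and the rest small, then $|B|$ is bounded below by $y$ times (the minimum number of points of $B$ in a large $(n-k+1)$-space, minus $|B\cap\Pi|$) plus $((p_0^{hk}-1)/(p_0^h-1)-y)$ times (the minimum number of points of $B$ in a small $(n-k+1)$-space, minus $|B\cap\Pi|$), plus $|B\cap\Pi|$. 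Comparing this lower bound against the upper bound $|B|\le p_0^{hk}+p_0^{hk-1}+p_0^{hk-2}+3p_0^{hk-3}$ from Lemma \ref{groottes} yields an upper bound on $y$.

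For the two cases I need the right lower bounds for the two types of $(n-k+1)$-spaces. A large $(n-k+1)$-space meets $B$ in more than $p_0^{h+1}-p_0^{h-1}-p_0^{h-2}-3p_0^{h-3}$ points by definition (the $s=1$ threshold of Lemma \ref{groottes}). For a small $(n-k+1)$-space $\Sigma$: the trace $\Sigma\cap B$ meets every $(n-k)$-space of $\Sigma$ in $1\bmod p_0$ points, so it is a small $1$-blocking set in $\Sigma$; in case (i) it genuinely blocks and, being non-trivial (a trivial one would be too large, or one checks it cannot reduce to a single $(n-k)$-space since $|B\cap\Pi|=1$), Corollary \ref{grootte} gives $|\Sigma\cap B|\ge p_0^h+p_0^{h-1}-p_0^{h-2}$; in case (ii) the same bound applies exactly as in Lemma \ref{large}. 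In case (i) we moreover only subtract $|B\cap\Pi|=1$ from each term, whereas in case (ii) we subtract $p_0+1$; feeding the case-(i) bookkeeping through and solving for $y$ gives $y\le p_0^{hk-h-2}+4p_0^{hk-h-3}-1$, and the case-(ii) bookkeeping reproduces exactly the inequality and conclusion $y\le 3p_0^{hk-h-3}$ of Lemma \ref{large} (so part (ii) is essentially a restatement of that lemma, included here for uniformity). The hypothesis $p_0\ge 7$ is what makes the resulting polynomial inequalities in $p_0$ go the right way.

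I expect the only real work to be the arithmetic of the final inequality: after substituting the threshold values one must verify that the degree-$hk$ terms cancel and that the dominant surviving term on the left forces $y$ below the claimed bound, for every $p_0\ge 7$ and every $k>1$, $h\ge 1$. This is routine but the constant $4p_0^{hk-h-3}$ in (i) is tight enough that one has to be careful about lower-order terms; I would reduce it to checking a single univariate inequality in $p_0$ by discarding all manifestly negative contributions. The main conceptual obstacle, such as it is, is justifying that a small $(n-k+1)$-space in case (i) really does carry a \emph{non-trivial} small $1$-blocking set so that Corollary \ref{grootte} applies — this follows because $|B\cap\Pi|=1<p_0^h+1$ forces the blocking set not to contain any $(n-k)$-space of $\Sigma$ through $\Pi$, and a small $1$-blocking set containing no such $(n-k)$-space through the fixed $(n-k)$-space cannot be trivial given the size constraints of Lemma \ref{groottes}.
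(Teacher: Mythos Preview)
Your overall strategy is exactly the paper's: count $|B|$ from below by summing over the pencil of $(n-k+1)$-spaces through $\Pi$, split into $y$ large and $(p_0^{hk}-1)/(p_0^h-1)-y$ small members, and compare with the upper bound from Lemma~\ref{groottes}. Part~(ii) is, as you say, literally Lemma~\ref{large}.

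The gap is in part~(i), in the lower bound you use for a small $(n-k+1)$-space $\Sigma$. You invoke Corollary~\ref{grootte} to get $|B\cap\Sigma|\ge p_0^{h}+p_0^{h-1}-p_0^{h-2}$, which requires $B\cap\Sigma$ to be a \emph{non-trivial} $1$-blocking set. Your justification for non-triviality is mistaken: a trivial $1$-blocking set in $\Sigma$ is a \emph{line}, not an $(n-k)$-space, so the observations that ``a trivial one would be too large'' and that $B\cap\Sigma$ ``cannot reduce to a single $(n-k)$-space'' are both beside the point. And in fact nothing excludes $B\cap\Sigma$ being a line: if $P$ is the single point $B\cap\Pi$, there may well be a line $L\subset\Sigma$ through $P$, $L\not\subset\Pi$, with $B\cap\Sigma=L$; then $|B\cap\Sigma|=p_0^{h}+1$, $\Sigma$ is small, and your lower bound fails. (Contrast with case~(ii): there $|B\cap\Pi|=p_0+1$ genuinely forces $B\cap\Sigma$ to be non-trivial, since a line meets the hyperplane $\Pi$ of $\Sigma$ in one point or lies in it, and neither is compatible with $|B\cap\Pi|=p_0+1$ when $h\ge 2$.)

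The fix is simple and is what the paper does: for case~(i) use only the universal bound $|B\cap\Sigma|\ge p_0^{h}+1$ (any $1$-blocking set has at least this many points). The counting then reads
\[
y\bigl(p_0^{h+1}-p_0^{h-1}-p_0^{h-2}-3p_0^{h-3}-1\bigr)+\Bigl(\tfrac{p_0^{hk}-1}{p_0^{h}-1}-y\Bigr)p_0^{h}+1\ \le\ p_0^{hk}+p_0^{hk-1}+p_0^{hk-2}+3p_0^{hk-3},
\]
and solving this for $y$ gives exactly $y\le p_0^{hk-h-2}+4p_0^{hk-h-3}-1$.
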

\begin{proof} (i) A small $(n-k+1)$-space through $\Pi$ meets $B$ in at least $p_0^h+1$ points. Suppose there are $y$ large $(n-k+1)$-spaces through $\Pi$. Then the number of points in $B$ is at least
$$y(p_0^{h+1}-p_0^{h-1}-p_0^{h-2}-3p_0^{h-3}-1)+
((p_0^{hk}-1)/(p_0^h-1)-y)p_0^h+1$$
which is at most $p_0^{hk}+p_0^{hk-1}+p_0^{hk-2}+3p_0^{hk-3}$. This yields $y\leq  p_0^{hk-h-2}+4p_0^{hk-h-3}-1$.

(ii) Suppose there are $y$ large $(n-k+1)$-spaces through $\Pi$. A small $(n-k+1)$-space through $\Pi$ meets $B$ in a linear $1$-blocking set, which is in this case, non-trivial and hence, by Theorem \ref{grootte}, has at least $p_0^h+p_0^{h-1}-p_0^{h-2}$ points.

Then the number of points in $B$ is at least
$$ y(p_0^{h+1}-p_0^{h-1}-p_0^{h-2}-3p_0^{h-3}-p_0-1)+$$
$$((p_0^{hk}-1)/(p_0^h-1)-y)(p_0^h+p_0^{h-1}-p_0^{h-2}-p_0-1)+p_0+1\ (\ast)$$
which is at most $p_0^{hk}+p_0^{hk-1}+p_0^{hk-2}+3p_0^{hk-3}$. This yields $y\leq  3p_0^{hk-h-3}$.
\end{proof}

\begin{lemma}\label{situatie} If $B$ is a non-trivial small minimal $k$-blocking set with exponent $e$ in $\PG(n,p^t)$, $p$ prime, $p_0:=p^e\geq 7$, $k>1$, then there exist a point $P\in B$, a tangent $(n-k)$-space $\Pi$ at the point $P$  and  small $(n-k+1)$-spaces $H_i$, through $\Pi$, such that there is a $(p_0+1)$-secant through $P$ in $H_i$, $i=1,\ldots,p_0^{hk-h}-5p_0^{hk-h-1}$.
\end{lemma}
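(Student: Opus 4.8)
The plan is to find a single point $P\in B$ that lies on a $(p_0+1)$-secant, produce a tangent $(n-k)$-space $\Pi$ through $P$ with $B\cap\Pi=\{P\}$, and then count, among the $(p_0^{hk}-1)/(p_0^h-1)$ spaces of dimension $n-k+1$ through $\Pi$, how many are \emph{small} and contain a $(p_0+1)$-secant through $P$. First I would establish that such a point $P$ exists: since $B$ is non-trivial and small with exponent $e$, some line meets $B$ in at least $2$ but not all its points, so by Corollary \ref{rechte1modp} there is a $(p_0+1)$-secant, and hence a point $P$ of $B$ lying on one. Next, starting from a $(p_0+1)$-secant $L$ through $P$, Lemma \ref{e} gives an $(n-k)$-space through $L$ meeting $B$ only in $L\cap B$; but I want a \emph{tangent} $(n-k)$-space, i.e. one meeting $B$ in just $P$. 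To get this I would instead start from a suitable tangent line through $P$ (every small minimal $k$-blocking set with $k\ge 1$ has tangent lines through most of its points by the counting in Lemma \ref{lemma6}, or one can project to reduce to a plane situation), apply Lemma \ref{e} to obtain a tangent $(n-k)$-space $\Pi$ at $P$, and then argue that $P$ still lies on a $(p_0+1)$-secant inside \emph{many} of the $(n-k+1)$-spaces through $\Pi$.

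The core of the argument is the double count through $\Pi$. By Lemma \ref{hypervlakken}(i), since $B\cap\Pi=\{P\}$ is a single point, at most $p_0^{hk-h-2}+4p_0^{hk-h-3}-1$ of the $(n-k+1)$-spaces through $\Pi$ are large; so at least $(p_0^{hk}-1)/(p_0^h-1)-p_0^{hk-h-2}-4p_0^{hk-h-3}+1$ are small. For each small $(n-k+1)$-space $H$ through $\Pi$, $H\cap B$ is a small $1$-blocking set in $H$, and by Theorem \ref{szonyi}(ii) it is minimal, hence a small minimal $1$-blocking set. Now I would bound the number of small $H$ through $\Pi$ in which $P$ does \emph{not} lie on a $(p_0+1)$-secant: in such an $H$, every secant through $P$ to $H\cap B$ is at least a $(p_0^2+1)$-secant (by the $1$ mod $p$ property, Theorem \ref{szonyi}(i), a secant line carrying more than $p_0+1$ points carries at least $p_0^2+1$); but $P$ must lie on at least one secant in $H$ — in fact each such $H$ forces at least $p_0^2$ points of $B$ outside $\Pi$. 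Counting these contributions against the upper bound $|B|\le p_0^{hk}+p_0^{hk-1}+p_0^{hk-2}+3p_0^{hk-3}$ from Lemma \ref{groottes} shows that the number of ``bad'' small $H$ is small, of lower order than $p_0^{hk-h}$. Subtracting the number of large $H$ and the number of bad small $H$ from $(p_0^{hk}-1)/(p_0^h-1)$ leaves at least $p_0^{hk-h}-5p_0^{hk-h-1}$ small $(n-k+1)$-spaces $H_i$ through $\Pi$ each containing a $(p_0+1)$-secant through $P$, which is exactly the claimed bound.

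The main obstacle I anticipate is the very first step: guaranteeing a \emph{tangent} $(n-k)$-space $\Pi$ at a point $P$ that simultaneously lies on a $(p_0+1)$-secant, since Lemma \ref{e} as stated produces an $(n-k)$-space matching a given line, not a tangent space independent of that secant. The cleanest route is probably to pick a tangent line $T$ through $P$ (such exist since $|B|$ is far below what would be forced if every line through $P$ were a secant — compare Lemma \ref{lemma6}), extend $T$ to a tangent $(n-k)$-space $\Pi$ via Lemma \ref{e}, and then show that for all but a lower-order number of $(n-k+1)$-spaces through $\Pi$, the point $P$ picks up a $(p_0+1)$-secant inside that space; this last claim follows by restricting Theorem \ref{aantalsecanten} (or Lemma \ref{lemma1} for $k=1$ applied fibrewise) to the small $(n-k+1)$-subspaces and using that a small $(n-k+1)$-space meeting $B$ in a tangent $(n-k)$-space plus extra points is a small minimal $1$-blocking set, which always has $(p_0+1)$-secants through a point lying on one. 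The bookkeeping to make the three error terms — large $H$, bad small $H$, and any spaces where $P$ lies on no secant at all — sum to less than $5p_0^{hk-h-1}$ is routine but needs $p_0\ge 7$ and $k>1$, exactly the hypotheses in force.
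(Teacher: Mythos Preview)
Your outline has the right overall shape but contains a genuine gap in the central counting step, and you are making the tangent-space step harder than it is. For the latter: since $B$ is \emph{minimal}, for every $P\in B$ the set $B\setminus\{P\}$ fails to block some $(n-k)$-space $\Pi$, and then $\Pi\cap B=\{P\}$; so a tangent $(n-k)$-space through $P$ exists immediately, with no need to manufacture a tangent line first or to invoke Lemma~\ref{e}. The real problem is your bound on the number of ``bad'' small $(n-k+1)$-spaces $H$ through $\Pi$ (those in which $P$ lies on no $(p_0+1)$-secant). You argue that each bad $H$ forces at least $p_0^2$ points of $B$ outside $\Pi$, but this is far too weak to separate bad from good: \emph{every} small $H$ through $\Pi$ already contributes at least $p_0^h$ points of $B\setminus\Pi$ (since $H\cap B$ is a $1$-blocking set in $H$), so your inequality gives no upper bound on the bad ones below the total $\approx p_0^{hk-h}$. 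There is also no cited result guaranteeing that in a non-trivial small minimal $1$-blocking set every point lies on a $(p_0+1)$-secant, so ``bad'' non-trivial $H$ cannot simply be ruled out.

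The paper circumvents this by reversing the count. It first invokes Theorem~\ref{aantalsecanten} to get that $P$ lies on at least about $p_0^{hk-1}$ $(p_0+1)$-secants in total. It then bounds the number of points of $B$ lying in \emph{large} $(n-k+1)$-spaces through $\Pi$; for this one needs the preliminary observation (established via an auxiliary $(n-k)$-space $\Pi_L$ through a fixed $(p_0+1)$-secant $L$) that $P$ lies on fewer than $2p_0^{hk-h-1}$ lines entirely contained in $B$, so that all but that many small $H$ through $\Pi$ meet $B$ in a \emph{non-trivial} $1$-blocking set and hence carry at least $p_0^h+p_0^{h-1}-p_0^{h-2}$ points. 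Subtracting, at most about $p_0^{hk-2}$ points of $B$ lie in large $H$, whence at most $p_0^{hk-3}$ of the $(p_0+1)$-secants through $P$ lie in large $H$. The remaining $(p_0+1)$-secants through $P$ fall into small $H$, and a single small $H$ can contain at most $(p_0^h-1)/(p_0-1)$ of them; dividing yields the claimed $p_0^{hk-h}-5p_0^{hk-h-1}$ small $H$ containing a $(p_0+1)$-secant through $P$. You mention Theorem~\ref{aantalsecanten} only in passing, but it is the engine of the argument: the key idea you are missing is ``count the $(p_0+1)$-secants through $P$ globally, show almost all of them live in small $(n-k+1)$-spaces through $\Pi$, and divide by the maximum number a single small space can hold.''
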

\begin{proof} Let $L$ be a $(p_0+1)$-secant to $B$ and let $P$ be a point of $B\cap L$. Lemma \ref{e} shows that there is an $(n-k)$-space $\Pi_L$ such that $B\cap \Pi_L=B\cap L$.
By Theorem \ref{aantalsecanten}, $P$ lies on $((p_0^{hk}-1)/(p_0^h-1)-3p_0^{hk-h-3})(p_0^{h-1}-4p_0^{h-2})+1$ other $(p_0+1)$-secants. 
By Lemma \ref{hypervlakken} (ii), there are at least $(p_0^{hk}-1)/(p_0^h-1)-3p_0^{hk-h-3}$ small hyperplanes through $\Pi_L$, which each contain at least $p_0^h+p_0^{h-1}-p_0^{h-2}-p_0-1$ points of $B$ not on $L$. Since $\vert B \vert <p_0^{hk}+p_0^{hk-1}+p_0^{hk-2}+3p_0^{hk-3}$ (see Lemma \ref{grootte}), there are less than $2p_0^{hk-1}$ points of $B$ left in large $(n-k+1)$-spaces through $\Pi_L$. Hence, $P$ lies on less than $2p_0^{hk-h-1}$ lines that are completely contained in $B$.

Since $B$ is minimal, $P$ lies on a tangent $(n-k)$-space $\Pi$ to $B$. There are at most $p_0^{hk-h-2}+4p_0^{hk-h-3}-1$ large $(n-k+1)$-spaces through $\Pi$ (Lemma \ref{hypervlakken} (i)). Moreover, since at least $\frac{p_0^{hk}-1}{p_0^h-1}-(p_0^{hk-h-2}+4p_0^{hk-h-3}-1)-(2p_0^{hk-h-1})$ $(n-k+1)$-spaces through $\Pi$ contain at least $p_0^h+p_0^{h-1}-p_0^{h-2}$ points of $B$, and at most $2p_0^{hk-h-1}$ of the small $(n-k+1)$-spaces through $\Pi$ contain exactly $p_0^h+1$ points of $B$, there are at most $p_0^{hk-2}$ points of $B$ contained in large $(n-k+1)$-spaces through $\Pi$. Hence, $P$ lies on at most $p_0^{hk-3}$ $(p_0+1)$-secants of the large $(n-k+1)$-spaces through $\Pi$. This implies that there are at least 
$(((p_0^{hk}-1)/(p_0^h-1)-3p_0^{hk-h-3})(p_0^{h-1}-4p_0^{h-2})+1)-p_0^{hk-3}$
 $(p_0+1)$-secants through $P$ left in small $(n-k+1)$-spaces through $\Pi$. Since in a small $(n-k+1)$-space through $\Pi$, there can lie at most $(p_0^{h}-1)/(p_0-1)$ $(p_0+1)$-secants through $P$, this implies that there are at least $p_0^{hk-h}-5p_0^{hk-h-1}$ $(n-k+1)$-spaces $H_i$ through $\Pi$ such that $P$ lies on a $(p_0+1)$-secant in $H_i$.
\end{proof}

We continue with the following hypothesis:
\begin{itemize}
\item[(H)] A small minimal $j$-blocking set in $\PG(n,q)$, $1\leq j<k$ is linear. 
\end{itemize}

\begin{lemma}  \label{handig} Let $B$ be a non-trivial small minimal $k$-blocking set with exponent $e$ in $\PG(n,p^t)$, $p$ prime, $p_0:=p^e\geq 7$, $k>1$. If we assume (H), then the following statements hold.
\begin{itemize}
\item[(i)]  A small $(n-k+s)$-dimensional space $\Pi$ of $\PG(n,p^t)$, $s<k$, 
intersects $B$ in a linear set and $|\Pi \cap B|\leq (p_0^{hs+1}-1)/(p_0-1)$.
\item[(ii)] Let $L$ be a $(p_0+1)$-secant to $B$ and let $S$ be a point of $B$, not on $L$. There exists a small $(n-2)$-space through $L$, skew to $S$.
\item[(iii)] A line intersects $B$ in a linear set.
\item[(iv)] Let $\Pi$ be a small $(n-2)$-space containing a $(p_0+1)$-secant to $B$. Then the number of large $(n-1)$-spaces through $\Pi$ is at most $4p_0^{h-3}$.
\end{itemize}
\end{lemma}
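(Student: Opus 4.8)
The plan is to prove the four parts in the order stated, since (i) is used in both (iii) and (iv), and a single counting device — \emph{most subspaces through a fixed small subspace are themselves small} — underlies (ii), (iii) and (iv).

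\textbf{Part (i).} If $s=0$, then ``$\Pi$ small'' means $|B\cap\Pi|\le 1$ and the claim is trivial, so take $1\le s<k$. In $\Pi\cong\PG(n-k+s,p_0^h)$ an $(n-k)$-space has codimension $s$ and, by Corollary \ref{rechte1modp} and the corollary following it, meets $B$ in $1\bmod p_0$ points; hence $B\cap\Pi$ is an $s$-blocking set of $\Pi$, and since $\Pi$ is small, $|B\cap\Pi|<p_0^{hs}+p_0^{hs-1}+p_0^{hs-2}+3p_0^{hs-3}<2p_0^{hs}$, so Theorem \ref{szonyi}(ii) (applied inside $\Pi$) shows $B\cap\Pi$ is a \emph{small minimal} $s$-blocking set of $\Pi$. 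Embedding $\Pi$ in a copy of $\PG(n,p_0^h)$ keeps this true, so (H) makes $B\cap\Pi$ linear, and since every line meets it in $1\bmod p_0$ or $0$ points it is $\F_{p_0}$-linear. Finally, a small minimal $\F_{p_0}$-linear $s$-blocking set has rank exactly $hs+1$: a $\pi$ of projective dimension $<hs$ in $\PG(h(n-k+s+1)-1,p_0)$ misses some $\S$ of an $(n-k)$-space of $\Pi$, so the rank is at least $hs+1$, while rank $\ge hs+2$ would force more than the ``small'' number of points by the standard size estimates for linear sets. Hence $|B\cap\Pi|\le(p_0^{hs+1}-1)/(p_0-1)$.

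\textbf{Parts (ii) and (iii).} The counting device: if $\Sigma$ is a small $(n-k+j)$-space, $j<k$, the $(n-k+j+1)$-spaces through $\Sigma$ partition $B\setminus\Sigma$; a large one contributes more than $p_0^{h(j+1)+1}-p_0^{h(j+1)-1}-p_0^{h(j+1)-2}-3p_0^{h(j+1)-3}$ points, while $|B\cap\Sigma|\le(p_0^{hj+1}-1)/(p_0-1)$ by (i) and $|B|<p_0^{hk}+p_0^{hk-1}+p_0^{hk-2}+3p_0^{hk-3}$ by Lemma \ref{lemma5}, so the number of large such spaces through $\Sigma$ is strictly smaller than their total number (a Gaussian coefficient of order $p_0^{h(k-j-1)}$), with room to spare because $p_0\ge7$; moreover only an $O(p_0^{-h})$ fraction of these spaces pass through a given point. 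For (iii): if $|B\cap L|\le 1$, or $L\subseteq B$ (in which case $B\cap L=L$ is $\F_{p_0}$-linear), the claim is immediate; otherwise $1<|B\cap L|<p_0^h+1$, so Lemma \ref{e} gives an $(n-k)$-space $\Pi_L\supseteq L$ with $B\cap\Pi_L=B\cap L$, whence $|B\cap\Pi_L|\le p_0^h$, the counting device (the estimate of Lemma \ref{large}, which stays valid as long as $|B\cap\Pi_L|\le p_0^h$) yields a small $(n-k+1)$-space through $\Pi_L$, part (i) makes its intersection with $B$ linear, and since the intersection of an $\F_{p_0}$-linear set with a line is $\F_{p_0}$-linear (in the spread model $\B(\pi)\cap L=\B(\pi\cap\langle\S(L)\rangle)$), $B\cap L$ is linear. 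For (ii) — where necessarily $k\ge 3$, since for $k=2$ a small $(n-2)$-space would be a tangent line and the statement is vacuous — start from $\Pi_L\supseteq L$ with $|B\cap\Pi_L|=p_0+1$, iterate the step $k-2$ times to reach a small $(n-2)$-space through $L$, and at the last iteration additionally demand that the space be skew to the given point $S\in B\setminus L$, which removes only an $O(p_0^{-h})$ fraction of the candidates.

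\textbf{Part (iv).} Here $k\ge 3$ (otherwise the hypothesis is vacuous), and by (i) $B\cap\Pi$ is $\F_{p_0}$-linear of rank $\le h(k-2)+1$ with $|B\cap\Pi|\le(p_0^{h(k-2)+1}-1)/(p_0-1)$. The $p_0^h+1$ hyperplanes through $\Pi$ partition $B\setminus\Pi$, each being small or large by Lemma \ref{lemma5}. I would first show every small hyperplane $\Sigma\supseteq\Pi$ meets $B$ in a \emph{non-trivial} $(k-1)$-blocking set: if not, $B\cap\Sigma\supseteq M$ for a $(k-1)$-space $M$; $M\not\subseteq\Pi$, as a $(k-1)$-space has more points than the rank-$(\le h(k-2)+1)$ linear set $B\cap\Pi$, so $M\cap\Pi$ is a full $(k-2)$-space inside $B\cap\Pi$, which is impossible because a $(k-2)$-space has $\F_{p_0}$-rank $h(k-1)>h(k-2)+1$ (using $h\ge 2$, the only case not already trivial). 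Hence, by Corollary \ref{grootte}, every small hyperplane through $\Pi$ has at least $p_0^{h(k-1)}+p_0^{h(k-1)-1}-p_0^{h(k-1)-2}$ points, and every large one more than $p_0^{h(k-1)+1}-p_0^{h(k-1)-1}-p_0^{h(k-1)-2}-3p_0^{h(k-1)-3}$. Writing $y$ for the number of large hyperplanes through $\Pi$ and using that the sum of $|B\cap\Sigma_i|$ over the $p_0^h+1$ hyperplanes equals $|B|+p_0^h|B\cap\Pi|$, these lower bounds together with $|B|<p_0^{hk}+p_0^{hk-1}+p_0^{hk-2}+3p_0^{hk-3}$ give an inequality that simplifies, for $p_0\ge 7$, to $y\le 4p_0^{h-3}$.

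\textbf{Expected main obstacle.} The delicate part is the arithmetic in (iv): the target $4p_0^{h-3}$ is small enough that the crude bound ``a small hyperplane has at least $p_0^{h(k-1)}+1$ points'' is too weak, so one must feed in the sharper Corollary \ref{grootte}, which is precisely why the triviality of the small hyperplanes has to be excluded first — and that step is where the rank bound of part (i) and the assumption $h\ge 2$ are used. Secondary routine points are the degenerate ranges $k=2$ (where (ii) and (iv) are vacuous) and $n$ close to $k$ (where the Gaussian coefficients in the counting device are smallest, yet still dominate their ``large'' counterparts because $p_0\ge 7$), and supplying from the standard theory of linear sets the fact used in (i) that a small minimal $\F_{p_0}$-linear $s$-blocking set has rank $hs+1$.
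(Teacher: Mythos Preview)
Your overall strategy coincides with the paper's: (i) via Theorem~\ref{szonyi}(ii) and hypothesis (H); (ii) by climbing from an $(n-k)$-space given by Lemma~\ref{e} through successive small superspaces; (iii) as a corollary of (i) and the first step of that climb; (iv) by the double count with Corollary~\ref{grootte} supplying the lower bound on small hyperplanes. Two points, however, are not in order.

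\textbf{Part (ii): avoiding $S$.} You propose to ignore $S$ during the intermediate steps and only ``at the last iteration additionally demand that the space be skew to $S$''. This does not work: if at some step the small $(n-k+j)$-space you select happens to be $\langle\Sigma_{j-1},S\rangle$ (nothing prevents this space from being small), then $S$ lies in every subsequent superspace and can never be avoided. The fix, which is what the paper does, is to discard the single candidate $\langle\Sigma_{j-1},S\rangle$ at \emph{every} step; this removes one space out of $(p_0^{h(k-j)}-1)/(p_0^h-1)$ and the counting still goes through. (Incidentally, for $k=2$ the hypothesis of (ii) is not vacuous --- a $(p_0+1)$-secant and a point $S\in B\setminus L$ certainly exist --- so the conclusion is simply false there; the paper is equally silent on this, and in the sequel $k=2$ is handled separately.)

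\textbf{Part (iv): the non-triviality step.} Your rank argument is incorrect. Containment of linear sets does not force an inequality of ranks: a full $(k-2)$-subspace can occur as (or inside) an $\F_{p_0}$-linear set of rank $h(k-2)+1$. For example, with $h=2$ and $k=3$, any plane $\pi$ inside the $3$-space $\langle\S(M\cap\Pi)\rangle$ meets every line of the regular spread there, so $\B(\pi)$ equals the whole projective line $M\cap\Pi$, yet $\pi$ has rank $3=h(k-2)+1$. Thus ``$M\cap\Pi$ has $\F_{p_0}$-rank $h(k-1)$'' is not a property that survives containment, and your contradiction evaporates. The correct argument is simpler and uses the hypothesis that $\Pi$ contains a $(p_0+1)$-secant $L$: if $B\cap\Sigma$ were a $(k-1)$-space $M$, then $L\cap B=L\cap M$ has $p_0+1>1$ points, forcing $L\subseteq M$, hence $|L\cap B|=p_0^h+1$, contradicting $|L\cap B|=p_0+1$ for $h\ge 2$. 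The paper does not spell this out either, but it is precisely what makes Corollary~\ref{grootte} applicable; with only the trivial bound $|B\cap\Sigma|\ge (p_0^{hk}-1)/(p_0^h-1)$ the count yields merely $y=O(p_0^{h-1})$, not $4p_0^{h-3}$.
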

\begin{proof} 
(i) It is clear that an $(n-k+s)$-space $\Pi$ meets $B$ in a small $s$-blocking set $B'$. Every $(n-k)$-space contained in $\Pi$ meets $B'$ in $1$ mod $p_0$ points, hence, by Theorem \ref{szonyi} (ii), $B'$ is a small minimal $s$-blocking set in $\PG(n-k+s,p_0^h)$, which is, by the hypothesis (H), $\F_{p_0}$-linear. It follows that $|B'|\leq (p_0^{hs+1}-1)/(p_0-1)$.

(ii) Lemma \ref{e} shows that there is an $(n-k)$-space $\Pi_{n-k}$ through $L$, such that $B\cap L=B\cap \Pi_{n-k}$. 
By Lemma \ref{groottes}, an $(n-k+1)$-space through $\Pi_{n-k}$ contains at most $(p_0^{h+1}-1)/(p_0-1)$ or at least $p_0^{h+1}-p_0^{h-1}-p_0^{h-2}-3p_0^{h-3}$ points of $B$. If all $(n-k+1)$-spaces through $\Pi_{n-k}$ (except possibly $\langle \Pi_{n-k},S\rangle$) would be large, the number of points in $B$ would be at least $((p_0^{hk}-1)/(p_0^h-1)-1)(p_0^{h+1}-p_0^{h-1}-p_0^{h-2}-3p_0^{h-3}-p_0^h)$, which is larger than  $p_0^{hk}+p_0^{hk-1}+p_0^{hk-2}+3p_0^{hk-3}$, a contradiction. Hence, there is a small $(n-k+1)$-space through $\Pi_{n-k}$.

Suppose, by induction, that there exists a small $(n-k+s)$-space $\Pi_{n-k+s}$ through $L$, skew to $S$ and suppose all $(p_0^{h(k-s)}-1)/(p_0^h-1)-1$ $(n-k+s)$-spaces through $\Pi_{n-k+s-1}$, different from $\langle \Pi_{n-k+s},S\rangle$ are large. Then the number of points in $B$ is larger than $p_0^{hk}+p_0^{hk-1}+p_0^{hk-2}+3p_0^{hk-3}$ if $s\leq k-2$, a contradiction. We conclude that there exists a small $(n-2)$-space through $L$, skew to $S$.

(iii) Let $L$ be a line, with $0<|L\cap B|<p^t+1$, otherwise the statement trivially holds. The previous part of this lemma shows that $L$ is contained in a small $(n-k+1)$-space, which has, by the first part of this lemma, a linear intersection with $B$. Hence, $B\cap L$ is a linear set.

(iv) A small $(n-1)$-space through $\Pi$ meets $B$ in at least $p_0^{hk-h}+p^{hk-h-1}-p^{hk-h-2}$ points (see Corollary \ref{grootte}) and a small $(n-2)$-space contains at most $(p_0^{hk-2h+1}-1)/(p_0-1)$ points by the first part of this lemma. By Lemma \ref{groottes}, a large $(n-1)$-space through $\Pi$ contains at least $p^{hk-h+1}-p^{hk-h-1}-p^{hk-h-2}-3p^{hk-h-3}$ points of $B$. Suppose there are $y$ large $(n-1)$-spaces through $\Pi$. Then the number of points in $B$ is at least
$$y(p_0^{hk-h+1}-p_0^{hk-h-1}-p_0^{hk-h-2}-3p_0^{hk-h-3}-(p_0^{hk-2h+1}-1)/(p_0-1))+$$
$$(p_0^h+1-y)(p_0^{hk-h}+p^{hk-h-1}-p^{hk-h-2}-(p_0^{hk-h+1}-1)/(p_0-1))+(p_0^{hk-2h+1}-1)/(p_0-1)$$
which is at most $p_0^{hk}+p_0^{hk-1}+p_0^{hk-2}+3p_0^{hk-3}$. This yields $y\leq  4p_0^{h-3}$.
\end{proof}

\begin{lemma}\label{ess} Assume (H). Let $B$ be a non-trivial small minimal $k$-blocking set with exponent $e$ in $\PG(n,p^t)$, $p$ prime, $p_0:=p^e\geq 7$ and let $P$ be a point of $B$, and let $\Pi$ be a tangent $(n-k)$-space to $B$ through $P$. Let $H_1$ and $H_2$ be two $(n-k+1)$-spaces through $\Pi$ for which $B\cap H_i=\mathcal{B}(\pi_i)$, for some $h$-space $\pi_i$ through a point $x\in \mathcal{S}(P)$, such that $P$ lies on a $(p_0+1)$-secant in $H_i$, $i=1,2$. Then $\mathcal{B}(\langle \pi_1,\pi_2 \rangle)\subset B$.
\end{lemma}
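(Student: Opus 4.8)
The plan is to reduce the $(n-k)$-dimensional situation to a situation in a small $(n-2)$-space, where we can apply the hypothesis (H) together with the uniqueness of the transversal line to a regulus (Remark \ref{rem2}), much as in the proof of Lemma \ref{span}(ii). First I would pick a $(p_0+1)$-secant $L_1$ through $P$ in $H_1$ and a $(p_0+1)$-secant $L_2$ through $P$ in $H_2$; these exist by hypothesis. Let $\ell_i$ be the (unique) line through $x$ with $\B(\ell_i)=B\cap L_i$, which lies in $\pi_i$ since $\B(\pi_i)=B\cap H_i$ and $\pi_i$ is an $h$-space through $x$ (so the transversal line through $x$ to the regulus $B\cap L_i$ must lie inside $\pi_i$). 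The goal $\B(\langle\pi_1,\pi_2\rangle)\subseteq B$ will be obtained by showing that $\langle\pi_1,\pi_2\rangle$ projects into a single $h$-space whose associated linear set is forced to lie in $B$.

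The key step is to find a point $Q\notin B$, lying on no secant to $B$, such that the $(n-k+2)$-space $\langle Q,H_1,H_2\rangle$ (which lies in some hyperplane $H$) has the property that the planes $\langle Q,L_1\rangle$, $\langle Q,L_2\rangle$, and $\langle Q,M\rangle$ meet $B$ only in $H$, for each line $M$ of $\PG(n,q)$ with $\B(M)$ a line in $\langle\pi_1,\pi_2\rangle$ not through $\pi_1\cap\pi_2$. Here I would use Lemma \ref{lemma6} to guarantee a large supply of such $Q$, and then use Lemma \ref{groottes} (the size bound $|B|<p_0^{hk}+p_0^{hk-1}+p_0^{hk-2}+3p_0^{hk-3}$) together with Theorem \ref{szonyi}(i) to argue that, since any plane through $M$ (or $L_i$) carrying an extra point of $B$ carries at least $p_0^2$ of them, most such planes are clean — this is exactly the counting in Lemma \ref{span}(ii), but one dimension up. Since $\Pi$ is a tangent $(n-k)$-space, $\langle Q,\Pi\rangle$ is a tangent $(n-k)$-space to $\bar B$ as well, and Lemma \ref{hypervlakken}(i)--(ii) controls how many large $(n-k+1)$-spaces sit through $\Pi$, which keeps the "bad'' planes few enough.

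Having fixed $Q$, I project $B$ from $Q$ onto $H$; by Theorem \ref{szonyi}(iii) the image $\bar B$ is again a small minimal $k$-blocking set in $\PG(n-1,q)$, which by $(H_{k,n-1})$ — or, in the Main Theorem 2 setting, by (H) applied after cutting down to a small $(n-k+2)$-space, whose intersection with $B$ is then $\F_{p_0}$-linear by Lemma \ref{handig}(i) — is $\F_{p_0}$-linear. Thus $\bar B\cap\langle H_1,H_2\rangle=\B(\sigma)$ for some subspace $\sigma$ through $x$. Because $L_1,L_2$ are still $(p_0+1)$-secants to $\bar B$, the uniqueness of the transversal line (Remark \ref{rem2}) forces $\ell_1,\ell_2\subseteq\sigma$, hence $\langle\ell_1,\ell_2\rangle\subseteq\sigma$ and $\B(\langle\ell_1,\ell_2\rangle)\subseteq\bar B$. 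To upgrade from $\langle\ell_1,\ell_2\rangle$ to $\langle\pi_1,\pi_2\rangle$, note $\pi_i$ is already inside $\sigma$ whenever the chosen $Q$ also keeps the planes $\langle Q, N\rangle$ clean for every line $N$ with $\B(N)\subseteq\pi_i$; since $\B(\pi_i)=B\cap H_i$ and $\langle Q,H_i\rangle$ meets $B$ only in $H$, this is automatic. Finally, for each line $m\subseteq\langle\pi_1,\pi_2\rangle$ not through $\pi_1\cap\pi_2$, the corresponding line $M$ of $\PG(n,q)$ satisfies $\B(m)\subseteq\bar B$, and cleanliness of $\langle Q,M\rangle$ gives $M\cap B=M\cap\bar B$, so $\B(m)\subseteq B$; letting $m$ vary over all such lines sweeps out all of $\langle\pi_1,\pi_2\rangle$, yielding $\B(\langle\pi_1,\pi_2\rangle)\subseteq B$.

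The main obstacle I expect is the bookkeeping in the choice of $Q$: one must simultaneously control the planes through $L_1$, through $L_2$, through every line inside $\pi_1$ or $\pi_2$, and through every line $M$ realizing a point of $\langle\pi_1,\pi_2\rangle$, while staying among the many points of Lemma \ref{lemma6}. The point counts are all of the same flavour as in Lemma \ref{span}(ii) and Lemma \ref{handig}, but assembling them into a single existence statement — and checking that the hypotheses $n-1\geq 2k$ (for Main Theorem 1) or the characteristic bound $p_0\geq t/e+11$ (for Main Theorem 2, entering through Lemma \ref{handig} and Theorem \ref{subline}) are actually used at the right places — is the delicate part; everything else is the now-familiar "project, linearize, match transversals'' routine.
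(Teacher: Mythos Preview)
Your approach is essentially the one from Lemma \ref{span}(ii) (project from a point $Q$ lying on no secant, invoke linearity of the projection, match transversal lines), but that machinery is not available in the setting of Lemma \ref{ess}. Two concrete failures:

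\begin{itemize}
\item Lemma \ref{lemma6}, which you rely on for the supply of points $Q$ off every secant, carries the hypothesis $n\geq 2k+1$. Lemma \ref{ess} is a tool for Main Theorem 2, where no such bound on $n$ is assumed (only $n>k$); for $n$ close to $k$ there need not be any such $Q$ at all.
\item Even if $Q$ existed, the projected set $\bar B$ is a small minimal $k$-blocking set in $\PG(n-1,q)$, and hypothesis (H) says nothing about $k$-blocking sets: it only gives linearity for $j$-blocking sets with $j<k$. Your proposed fix (``cut down to a small $(n-k+2)$-space'') produces a $2$-blocking set, which is again outside (H) when $k=2$; and for $k>2$ you have not argued why a small $(n-k+2)$-space through the plane $\langle L_1,L_2\rangle$ should exist, nor how the projection step interacts with this cut-down.
\end{itemize}

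The paper's argument goes in the opposite direction: instead of lowering $n$ by projection, it lowers $k$ by \emph{intersection} with small hyperplanes. Using Lemma \ref{handig}(ii),(iv) one finds, for each fixed $(p_0+1)$-secant $L$ in $H_1$, a small $(n-2)$-space $\Pi_{n-2}\supset L$, and then all but at most $4p_0^{h-3}$ hyperplanes through $\Pi_{n-2}$ are small; in each such hyperplane $B$ restricts to a small minimal $(k-1)$-blocking set, which \emph{is} covered by (H). Matching transversals in these hyperplanes gives $\B(\langle \ell,\ell_j\rangle)\subset B$ for many lines $\ell_j\subset\pi_2$, hence many points of $\langle \ell,\pi_2\rangle$ land in $B$. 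The passage from ``many points'' to ``all points'' is then done by Theorem \ref{subline}: a line of $\langle \ell,\pi_2\rangle$ determines an $\F_{p_0}$-subline of $\PG(n,q)$, and if it meets $B$ in more than $h$ points it is entirely contained in $B$. This is precisely where the bound $p_0-5>h$ (i.e.\ $p_0\geq t/e+11$ after the second iteration) is used, and it is the genuinely new ingredient relative to Lemma \ref{span}(ii). Your sketch mentions Theorem \ref{subline} only in passing and never deploys it; without it the argument cannot close.
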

\begin{proof}
Let $L$ be a $(p_0+1)$-secant through $P$ in $H_1$ and let $\ell$ be the line in $\pi$ through $x$ such that $\langle\B(\ell)\rangle=L$. Let $s$ be a point of $\pi_2$. By Lemma \ref{handig} (ii), there is a small $(n-2)$-space $\Pi_{n-2}$ through $L$, skew to $\B(s)$. There are at least $p_0^{h-1}-4p_0^{h-2}$ $(p_0+1)$-secants through $P$, of which at least $p_0^{h-1}-4p_0^{h-2}-(p_0^{h-1}-1)/(p_0-1)$ span an $(n-1)$-space together with $\Pi_{n-2}$. By Lemma \ref{handig} (iv), there are at most $4p_0^{h-3}$ large spaces through $\Pi_{n-2}$, so at least $p_0^{h-1}-4p_0^{h-2}-(p_0^{h-1}-1)/(p_0-1)-4p_0^{h-3}$ of the $(p_0+1)$-secants through $P$ have a transversal line $\ell_k$, for which $\B(\langle\ell,\ell_k\rangle)\subset B$. This gives in total at least $p_0^{h+1}-6p_0^{h}$ points $Q$ in $\langle \ell,\pi_2\rangle$ for which $\B(Q)\subset B$, denote this pointset by $G$. This means that every point $t$ of $\langle \ell,\pi_2\rangle$ lies on a line $m$ with at least $p_0-5$ points of $G$. Since $\langle \B(m)\rangle$ either is contained in $B$, or it meets $B$ in a linear set of rank at most $h$ (see Lemma \ref{handig} (iii)), and $p_0-5>h$, again by Theorem \ref{subline}, $\B(m)\subset B$ by Theorem \ref{subline}, and hence, $\B(t)\subset B$. 

Hence, for all $(p_0+1)$-secants $\B(\ell)$, with $\ell$ through $x$, in $H_1$, $\B(\langle \ell,\pi_2\rangle)\subset B$. This shows that there are at least $(p_0^{h-1}-4p_0^{h-2})p_0^{h+1}+(p_0^{h+1}-1)/(p_0-1)$ points $Q$ in the $2h$-space $\langle \pi_1,\pi_2\rangle$ such that $\B(Q)\subset B$. Every point $t$ of 
$\langle \pi_1,\pi_2\rangle$ lies on a line $m$ with at least $p_0-5$ points of $G$. Again, since $p_0-5>h$, by Theorem \ref{subline}, $\B(m)\subset B$ and hence, $\B(t)\subset B$. It follows that $\B(\langle \pi_1,\pi_2\rangle)\subseteq B$.

\end{proof}

{\bf Proof of Main Theorem 2.} Let $B$ be a non-trivial small minimal $k$-blocking set with exponent $e$ in $\PG(n,p^t)$, $p$ prime, $p_0:=p^e\geq 7$.
We will show that, assuming that all small minimal $1$-blocking sets with exponent $e$ in $\PG(n,p^t)$, $p$ prime, $p_0:=p^e\geq 7$, are $\mathbb{F}_{p_0}$-linear, $B$ is $\mathbb{F}_{p_0}$-linear. By induction, we may assume (H) holds. If $B$ is a $k$-space, then $B$ is $\mathbb{F}_{p_0}$-linear. If $B$ is a non-trivial small minimal $k$-blocking set, Lemma \ref{situatie} shows that there exists a point $P$ of $B$, a tangent $(n-k)$-space $\Pi$ at the point $P$ and at least $p_0^{hk-h}-5p_0^{hk-h-1}$ $(n-k+1)$-spaces $H_i$  through $\Pi$ for which $B\cap H_i$ is small and linear, where $P$ lies on at least one $(p_0+1)$-secant of $B\cap H_i$, $i=1,\ldots,s$, $s\geq p_0^{hk-h}-5p_0^{hk-h-1}$. Let $B\cap H_i=\mathcal{B}(\pi_i), i=1,\ldots, s$, with $\pi_i$ an $h$-dimensional space in $\PG(h(n+1)-1,p_0)$, where $x\in \pi_i$, with $x\in \S(P)$.

Lemma \ref{ess} shows that $\mathcal{B}(\langle \pi_i,\pi_j \rangle)\subseteq B$, $0\leq i\neq j\leq s$. 

If $k=2$, the set $\mathcal{B}(\langle \pi_1,\pi_2 \rangle)$ corresponds to a linear $2$-blocking set $B'$ in $\PG(n,p_0^h)$. Since $B$ is minimal, $B=B'$, and the Theorem is proven.

Let $k>2$. Denote the $(n-k+1)$-spaces through $\Pi$, different from $H_i$, by $K_j, j=1,\ldots, z$. It follows from Lemma \ref{situatie} that $z\leq 5p_0^{hk-h-1}+(p_0^{hk-h}-1)/(p_0-1)\leq 6p_0^{hk-h-1}$.
There are at least $(p_0^{hk-h}-5p_0^{hk-h-1}-1)/p_0^h$ different $(n-k+2)$-spaces $\langle H_1, H_j\rangle$, $1<j\leq s$. If all $(n-k+2)$-spaces $\langle H_1,H_j\rangle$, contain at least $10p_0^{h-1}$ of the spaces $K_i$, then $z\geq 10p_0^{h-1}(p_0^{hk-h}-5p_0^{hk-h-1}-1)/p_0^h>6p_0^{hk-h-1}$, a contradiction if $p_0> h+10$. Let $\langle H_1,H_2\rangle$ be an $(n-k+2)$-spaces containing less than $10p_0^{h-1}$ spaces $K_i$. 
 
Suppose by induction that for any $1<i<k$, there is an $(n-k+i)$-space $\langle H_1,H_2,\ldots, H_i\rangle$  containing at most $10p_0^{hi-h-1}$ of the spaces $K_i$ such that $\mathcal{B}(\langle \pi_1,\ldots,\pi_i\rangle)\subseteq B$. 

There are at least $$\frac{p_0^{hk-h}-6p_0^{hk-h-1}-(p_0^{hi}-1)/(p_0^h-1)}{p_0^{h}}$$ different $(n-k+i+1)$-spaces $\langle H_1,H_2,\ldots, H_i,H_r\rangle$, $H_r\not\subseteq \langle H_1,H_2,\ldots, H_i\rangle$. If all of these contain at least $10p_0^{hi-1}$ of the spaces $K_i$, then $z\geq 6p_0^{hk-h-1}$,
 a contradiction. Let $\langle H_1,\ldots, H_{i+1}\rangle$ be an $(n-k+i+1)$-space containing less than $10p_0^{hi-1}$ spaces $K_i$. We still need to prove that $\mathcal{B}(\langle \pi_1,\ldots,\pi_{i+1}\rangle)\subseteq B$. Since $\mathcal{B}(\langle \pi_{i+1}, \pi \rangle)\subseteq B$, with $\pi$ an $h$-space in $ \langle \pi_1,\ldots,\pi_i\rangle$ for which $\mathcal{B}(\pi)$ is not contained in one of the spaces $K_i$, there are at most $10p_0^{hi-h-1}$ $2h$-dimensional spaces $\langle \pi_{i+1},\mu\rangle$ for which $\mathcal{B}(\langle\pi_{i+1},\mu\rangle)$ is not necessarily contained in $B$, giving rise to at most $v:=10p_0^{hi-h-1}(p_0^{2h+1}-1)/(p_0-1)$ points $t$ for which $\mathcal{B}(t)$ is not necessarily contained in $B$.  Let $u$ be a point of such a space $\langle \pi_{i+1},\mu \rangle$, and suppose that $\B(u)\notin B$. If each of the $(p_0^{hi+h}-1)/(p_0-1)$ lines through $u$ in $\langle \pi_1,\ldots, \pi_{i+1}\rangle$ contains at least $10$ of the points $t$ for which $\mathcal{B}(t)$ is not in $B$, then there are more than $v$ such points $t$, a contradiction. Hence, there is a line $n$ through $u$ for which for at least $p_0-10$ points $v\in n$, $\mathcal{B}(v)\in B$. Every line $L$ meets $B$ in a linear set (see Lemma \ref{handig} (iii)), and if this linear set has rank at least $h+1$, then $L$ is completely contained in $B$. This implies that $\langle \B(n)\rangle\cap B$ has rank at most $h$, and that the subline $\B(n)$ contains at least $p_0-10$ points of the linear set $\langle \B(n)\rangle\cap B$. Since $p_0-10>h$, by Theorem \ref{subline}, $\B(n)$ is contained in $\langle \B(n)\rangle\cap B$, so $\B(u)\subset B$, a contradiction.

This implies that $\mathcal{B}(\langle \pi_1,\ldots,\pi_{i+1}\rangle)\subseteq B$.
 
Since $\mathcal{B}(\langle \pi_1,\ldots,\pi_k\rangle)\subseteq B$, and $\mathcal{B}(\langle \pi_1,\ldots, \pi_k \rangle)$ corresponds to a linear $k$-blocking set $B'$ in $\PG(n,p_0^h)$ contained in the minimal $k$-blocking set $B$, $B=B'$ and hence, $B$ is $\F_{p_0}$-linear.
\qed
\\

{\bf Acknowledgment:} This research was done while the author was visiting
 the discrete algebra and geometry group (DAM) at Eindhoven University of Technology, the Netherlands. The author thanks A. Blokhuis and all other members of this group for their hospitality during her stay.

\end{document}